\documentclass[11pt,reqno]{amsart}

\usepackage{setspace}
\usepackage{fullpage}
\usepackage{datetime}
\usepackage{xcolor}
\usepackage{caption}

\usepackage{enumitem}
\newcommand\rmlabel{\upshape({\itshape\roman*\,\/})}

\usepackage{amsmath}%
\usepackage{amsfonts}%
\usepackage{amssymb}%
\usepackage{graphicx}
\usepackage{mathrsfs}
\usepackage{hyperref}
\usepackage[margin=1in]{geometry}
\usepackage[normalem]{ulem}
\usepackage{mathtools}

\usepackage{tikz}
\usetikzlibrary{fit,positioning,backgrounds}

\allowdisplaybreaks[4] 
\newtheorem{theorem}{Theorem}
\theoremstyle{plain}

\newtheorem{claim}[theorem]{Claim}

\newtheorem{conjecture}[theorem]{Conjecture}

\newtheorem{definition}[theorem]{Definition}

\newtheorem{lemma}[theorem]{Lemma}

\newtheorem{proposition}[theorem]{Proposition}

\newtheorem{setup}{Setup}

\numberwithin{equation}{section}
\numberwithin{theorem}{section}
\numberwithin{case}{section}

\newtheorem*{lemma*}{Lemma~\ref{lem:abs}}
\newtheorem*{Lemma*}{Lemma~\ref{lem:con}}
\newtheorem*{LemmA*}{Lemma~\ref{lem:cov}}
\newtheorem*{lemmA*}{Lemma~\ref{lem:res}}

\usepackage{euflag}

\usepackage{pgfplots} 
\usepackage{tikz}
\usetikzlibrary{calc}
\usetikzlibrary{arrows,shapes,chains}
\pgfdeclarelayer{background}
\pgfdeclarelayer{foreground}
\pgfsetlayers{background,main,foreground}
\usetikzlibrary{decorations.pathreplacing}

\def\COMMENT#1{}
\let\COMMENT=\footnote
\usepackage{lineno} 

\begin{document}

\title{Clique-factors in randomly perturbed hypergraphs}

\author[Y.~Chang]{Yulin Chang}
\address{(Y. Chang) School of Mathematical Sciences, Ocean University of China, Qingdao, 266100, China}
\email{ylchang@ouc.edu.cn}

\author[J.~Han]{Jie Han}
\address{(J. Han) School of Mathematics and Statistics, Beijing Institute of Technology, Beijing, 100000, China}
\email{han.jie@bit.edu.cn}

\author[P.~Morris]{Patrick Morris}
\address{(P. Morris) Departament de Matemàtiques and IMTECH, Universitat Politècnica de Catalunya (UPC), Barcelona, 08034, Spain}
\email{pmorrismaths@gmail.com}

\author[L.~Sun]{Lin Sun}
\address{(L. Sun) School of Mathematics, Qingdao University, Qingdao, 266071, China}
\email{sun.lin@qdu.edu.cn}

\thanks{Y.~Chang is partially supported by Natural Science Foundation of Shandong Province (ZR2026MS0029) and Natural Science Foundation of China (12201352). J.~Han is partially supported by Natural Science Foundation of China (12371341). P.~Morris was supported by PCI2024-155080-2 (SRC-ExCo), by the
 European Union's Horizon Europe Marie Sk{\l}odowska-Curie grant
 RAND-COMB-DESIGN - project number 101106032 {\euflag} and by a Ram\'on y
 Cajal fellowship RYC2024-049272-I.  L.~Sun is partially supported by  Natural Science Foundation of China (12601674), Natural Science Foundation of Shandong Province (ZR2026QC0057) and  Qingdao
 Natural Science Foundation (26-1-1-68-zyyd-jch).}

\begin{abstract}
In this paper we establish the following result for clique-factors in randomly perturbed hypergraphs when the cliques are large enough.
For integers $k \geq 3$ and $t\geq t_0(k)$, we show that for every $\alpha> 0$, there exists $C>0$ such that for sufficiently large $n\in(t+1)\mathbb{N}$, the union of a $k$-uniform hypergraph on $n$ vertices with minimum codegree at least $\alpha n$ and a binomial random $k$-uniform hypergraph $H^{(k)}(n,p)$ with $p\geq Cn^{-{t/\binom{t+1}{k}}}$ on the same vertex set contains a $K_{t+1}^{(k)}$-factor asymptotically almost surely.
The bound on $p$ is asymptotically best possible and extends the graph case, resolved by Balogh, Treglown and Wagner.
In particular, we have $t_0(3)=3$, which implies that our result applies to every $t\ge 3$ for $3$-uniform hypergraphs. Together with the previously established result for perfect matchings, this covers all possible clique sizes for $3$-uniform hypergraphs.

\end{abstract}


\maketitle
\noindent {\bf Keywords:} random hypergraphs; randomly perturbed hypergraphs; clique-factors\\

\section{Introduction}
\label{s1}

Let $k \ge 2$ be an integer.
A $k$-uniform hypergraph (or $k$-graph) $H$ consists of a vertex set $V$ and an edge set $E \subseteq \binom{V}{k}$, where $\binom{V}{k}$ denotes the family of all $k$-subsets of $V$. If $E = \binom{V}{k}$, $H$ is called the \emph{complete $k$-graph} and is denoted by $K_n^{(k)}$, where $n = |V|$. When $k=2$, this simply corresponds to a standard \emph{graph}.
Given two $k$-graphs $F$ and $H$, an \emph{$F$-tiling} in $H$ is defined as a collection of vertex-disjoint copies of $F$. 
An $F$-tiling that covers all vertices of $H$ is referred to as a \emph{perfect $F$-tiling}, which is also known as an \emph{$F$-factor} or a \emph{perfect $F$-packing}.
A necessary condition for its existence is that $|V(F)|$ divides $|V(H)|$.
A fundamental objective in extremal graph theory is to establish various conditions that guarantee the existence of an  $F$-factor in a given $k$-graph $H$. 
When $F$ has the same  number of vertices  as $H$, an $F$-factor is simply a spanning copy of $F$ in $H$, with prominent examples including Hamilton cycles, powers of Hamilton cycles, and  bounded-degree spanning trees. 
Alternatively, $F$ may be a fixed hypergraph, such as a clique of a specified size.
In this paper, we focus on the latter  setting, where $F = K_t^{(k)}$ for a fixed integer $t\ge k$; that is, $F$ is the complete $k$-graph on $t$ vertices.
For any positive integer 
$n$, let $[n]\coloneqq\{1,2,\dots,n\}$.

\subsection{Graphs and hypergraphs}
In the graph case ($k=2$), the celebrated Hajnal--Szemer\'{e}di theorem~\cite{HS} asserts that any graph on $n$ vertices with a minimum degree at least $(1 -1/t)n$ contains a $K_t$-factor.  For arbitrary graphs $F$, K\"{u}hn and Osthus~\cite{KO} determined, up to an additive constant, the minimum degree required for a graph to guarantee an $F$-factor, improving upon earlier results in~\cite{AY,KSS}.
Extending such degree conditions to $k$-graphs is a natural and important research direction.
For a $k$-graph $H$ and a $d$-subset $S\subseteq V(H)$ with $1\leq d\leq k-1$, the \emph{degree} of $S$ is defined as $\deg_H(S)\coloneqq\left|\left\{e\in E(H)\colon S\subseteq e\right\}\right|$.
The \emph{minimum $d$-degree} $\delta_{d}(H)$ of $H$ is the minimum of $\deg_H(S)$ over all $d$-subsets $S$ of $V(H)$.
In particular, $\delta_{k-1}(H)$ is called the \emph{minimum codegree}, and $\delta_1(H)$ is the \emph{minimum vertex degree}. 
However, determining these exact minimum degree requirements becomes significantly more challenging for hypergraphs, even for small $k$-graphs $F$.
For instance, the exact minimum vertex degree required to ensure the existence of a perfect matching in $k$-graphs remains unknown for $k\ge 6$.
For clique factors, the current best general minimum codegree conditions are due to Lo and Markstr\"om~\cite{lo2015f}. For arbitrary $t$, they showed that 
any $n$-vertex $3$-graph $H$ with $\delta_2(H) \ge \left(1-\frac{2}{t^2-3t+4}+o(1)\right)n$ contains a $K_t^{(3)}$-factor. For the specific case of $K_4^{(3)}$-factors, Lo and Markstr\"om~\cite{lo2015f} showed that a minimum codegree of $3n/4+o(n)$ is sufficient by using the absorption method. Independently and simultaneously, Keevash and Mycroft~\cite{KM2015} determined the exact minimum codegree requirement. They proved that any $n$-vertex $3$-graph $H$ contains a $K_4^{(3)}$-factor if $\delta_2(H) \ge 3n/4-2$ when $n \in 8\mathbb{N}$, and $\delta_2(H) \ge 3n/4-1$ otherwise.
For further results on factors in graphs and hypergraphs, we refer the reader to the surveys~\cite{KO1,RR,Zhao}.

\subsection{Random (hyper)graphs}
Similar questions have been studied in the setting of the binomial random $k$-graph $H^{(k)}(n, p)$, which contains $n$ vertices
and each $k$-subset forms an edge independently with probability $p=p(n)$.
A \emph{property} $\mathcal{P}$ is a subset of the set of all labeled $k$-graphs on the vertex set $[n]$. 
We are particularly interested in \emph{monotone increasing} properties, such as containing an 
$F$-factor.
We say that $H^{(k)}(n,p)$ has the property $\mathcal{P}$ \emph{asymptotically almost surely} (a.a.s.) if $ \lim_{n\rightarrow \infty} \mathbb{P}\left[ H^{(k)}(n,p)\in\mathcal{P}\right] =1$. 
A function $p^* = p^*(n)$ is called a \emph{threshold} for a monotone increasing property $\mathcal{P}$ if,
a.a.s., $H^{(k)}(n,p)\in\mathcal{P}$ when $p = \omega(p^*)$, and $H^{(k)}(n,p)\notin\mathcal{P}$
when $p = o(p^*)$.
The threshold for the appearance of an 
$F$-factor in $H^{(k)}(n, p)$ depends on specific density parameters of $F$.
For a $k$-graph $F$ with at least two vertices, let $v_F$ and $e_F$ denote the number of vertices and edges in~$F$, respectively, and define 
\begin{equation*}
    d(F)\coloneqq \frac{e_{F}}{v_{F}-1} \mbox{ and } d^*(F)\coloneqq \max\left\{d(F')\colon F'\subseteq F, v_{F'}\ge 2\right\}.
\end{equation*}

Johansson, Kahn, and Vu~\cite{JKV} made significant progress on this problem for both graphs and hypergraphs.
They conjectured that the threshold for the appearance of an $F$-factor in a binomial random $k$-graph is $\ell(n;F)n^{-1/d^*(F)}$, where $\ell(n;F)$ is an explicit polylogarithmic factor that depends on the structure of~$F$ (see~\cite{JKV} for details).
Furthermore, they proved that the conjecture holds when the~$\ell(n;F)$ term is replaced by $n^{o(1)}$, and determined the threshold (where $\ell(n;F)=(\log n)^{1/e_F}$) for all strictly balanced $k$-graphs~$F$.
In particular, for $F=K_t^{(k)}$ with $t\ge k$, this implies that the threshold is $n^{-(t-1)/\binom{t}{k}}(\log n)^{1/\binom{t}{k}}$. 
Subsequently,  for ordinary graphs ($k=2$), Gerke and McDowell~\cite{GM} determined the threshold (where $\ell(n;F)=1$) for non-vertex-balanced graphs $F$.

\subsection{Randomly perturbed (hyper)graphs}
For two $k$-graphs $H$ and $H'$, let $H\cup H'$ denote the $k$-graph with vertex set $V(H)\cup V(H')$ and edge set $E(H)\cup E(H')$. In this paper, we study the model of a randomly perturbed hypergraph $H\cup H^{(k)}(n,p)$, introduced by Bohman, Frieze, and Martin~\cite{BFM}.
This model starts with a deterministic $n$-vertex $k$-graph $H$ with certain density condition
and adds to it the edges of a random $k$-graph $H^{(k)}(n,p)$ on the same vertex set.

In the graph case ($k=2$), Balogh, Treglown, and Wagner~\cite{BTW} determined, for any fixed graph $F$, the number of random edges one needs to add to a graph $H$ with linear minimum degree to ensure that the resulting graph contains an $F$-factor a.a.s. We state their result formally as follows.
\begin{theorem}[Balogh, Treglown, and Wagner~\cite{BTW}]
Let $F$ be a fixed graph and let $\alpha > 0$. There exists $C > 0$ such that the following holds for sufficiently large $n \in |V(F)|\mathbb{N}$. Let $G$ be a graph on $n$ vertices with $\delta(G) \ge \alpha n$, and let $p=p(n) \ge C n^{-1/d^*(F)}$. Then a.a.s.~$G \cup G(n,p)$ contains an $F$-factor.
\end{theorem}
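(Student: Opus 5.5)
The statement is the Balogh--Treglown--Wagner theorem for graphs. I would prove it with the absorbing method, in which the random graph supplies the copies of $F$ and the deterministic graph $G$ only supplies connectivity. Write $f=|V(F)|$ and $m^*=d^*(F)$. The density $p\ge Cn^{-1/m^*}$ is exactly the threshold at which $G(n,p)$ contains linearly many vertex-disjoint copies of $F$, rather than a full factor. By Janson's inequality, together with the fact that $F$'s densest subgraph has density $m^*$, any linear-sized vertex set $U$ with $|U|\ge \varepsilon n$ spans a copy of $F$ in $G(n,p)$ with probability $1-\exp(-\Omega(n))$. A union bound then shows that a.a.s. every such $U$ contains a copy of $F$ simultaneously, provided $C=C(\varepsilon)$ is large. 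Applying this greedily yields an $F$-tiling covering all but $\varepsilon n$ vertices of any prescribed linear set.

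\textbf{Absorbing structure.} I would build an absorbing set. For an $f$-set $S$, call a set $A$ of bounded size an \emph{absorber} for $S$ if both $A$ and $A\cup S$ admit $F$-factors in $G\cup G(n,p)$.

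The natural gadget uses edges of $G$ to route. For each vertex $v\in S$, pick a neighbour $u_v$ of $v$ in $G$; there are at least $\alpha n$ choices. Then find a copy of $F$ in the random graph containing $u_v$ together with $f-1$ further vertices, arranged so that $v$ can swap in for $u_v$. Concretely, the gadget consists of copies of $F$ in which one designated vertex is joined to the rest only through a structure that $G$ guarantees. Since $G$ alone need not contain $F$, the swap must be arranged so that the edges at $v$ come from $G(n,p)$ and $G$ combined. A cleaner choice is to use \emph{$f$-absorbers built from many random copies}: show that each $f$-set $S$ has $\Omega(n^{b})$ absorbers of size $b$ in $G\cup G(n,p)$.

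I would then apply the standard random-selection lemma of R\"odl--Ruci\'nski--Szemer\'edi type to extract a small family of disjoint absorbers. This family covers every $S$ at least $\gamma n$ times, and its union $A$ has size at most $\gamma' n$.

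\textbf{Main obstacle.} The hard step is counting absorbers. $G(n,p)$ at this density is too sparse for a fixed gadget to appear around every given $S$ with the required multiplicity. The gadget count is a random variable, so uniformity over all $S$ needs a deletion or Janson argument. Moreover, the gadget must use $G$'s minimum degree in an essential way, because $G(n,p)$ alone has isolated-copy obstructions at this $p$ (it lacks the log factor).

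My first attempt would be a two-step connection. Use $G$-edges from each vertex of $S$ to $\alpha n$-sized neighbourhoods. Inside those neighbourhoods, random copies of $F$ minus a vertex are plentiful, by Janson applied to the subgraph $F-x$, whose density is at most $m^*$, so its copies exist at this $p$. The missing edges to the vertex of $S$ would be recovered from $G$ via a regularity-type argument that finds a vertex $v$ adjacent in $G$ to a linear set in which to embed.

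Since $G$ supplies only one neighbour edge per vertex, I expect to need the weaker property that $F$ need not be fully rooted. Instead I would use a ``switching'' absorber: $v$ joins a copy $F'$ in place of $u$, where $u$ and $v$ share $F$-completions. To obtain this, partition the vertices into classes of vertices reachable from one another, and use $\delta(G)\ge\alpha n$ to show that there are boundedly many classes and that they can be merged.

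\textbf{Completion.} Set aside $A$. Tile all but $\varepsilon n\ll\gamma n$ vertices of $V\setminus A$ greedily using the Janson property. Partition the leftover vertices into $f$-sets; divisibility holds since $n\in f\mathbb{N}$. Absorb each $f$-set with a distinct absorber from $A$, using the multiplicity $\gamma n>\varepsilon n$. This yields an $F$-factor a.a.s.
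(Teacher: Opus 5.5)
Your almost-cover step and the final assembly are fine. The absorbing step, however, is the entire difficulty, and it has two genuine gaps.

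First, the selection tool you cite does not apply at this density. The R\"odl--Ruci\'nski--Szemer\'edi random-selection lemma needs every $f$-set $S$ to have $\Omega(n^b)$ absorbers of size $b$. This is false at $p=Cn^{-1/d^*(F)}$, which is the only case that matters, by monotonicity. Take $G=K_{\alpha n,(1-\alpha)n}$ and $F$ non-bipartite. Then every copy of $F$ in $G\cup G(n,p)$ uses a random edge, so every absorber spans a random edge. The expected number of such $b$-sets is $O(n^bp)=o(n^b)$. Selecting $b$-sets with probability $\Theta(n^{1-b})$ therefore leaves each $S$ with an expected $O(np)=o(n)$ selected absorbers. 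That is not the $\Omega(n)$ needed to swallow a leftover of size $\varepsilon n$. You notice the sparsity under ``Main obstacle'', but no deletion or Janson argument can create $\Omega(n^b)$ absorbers.

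You need a scheme that tolerates sparse counts, for instance the template-based Lemma~\ref{lem:absorbing}. It only asks for two things:
(i) covering any set of at most $\gamma_1 n$ vertices inside a reserved set $V_1$;
(ii) vertex-disjoint absorbers, found simultaneously, for boundedly many families of at most $\gamma_1 n$ disjoint $f$-sets.
Property (ii) follows from Proposition~\ref{p1} once each $S$ has $\Omega(n^b)$ \emph{deterministic} skeletons and the random part $F_{\mathrm{ran}}$ of the absorber, rooted at $W$, satisfies $\Phi_{F_{\mathrm{ran}}\setminus W}\ge Cn$ and $\Phi_{F_{\mathrm{ran}},W}\ge Cn^{1/\ell}$.

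Second, you never fix an absorber. The proposal moves between a single $G$-neighbour $u_v$, a regularity argument, and reachability classes to be ``merged'' via $\delta(G)\ge\alpha n$, and the merging is never justified. The misstep is the premise that $G$ ``supplies only one neighbour edge per vertex''. The point is to place a whole random copy of $F-x$ inside a $G$-neighbourhood, so that every edge at the rooted vertex comes from $G$.

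Concretely, let $S=\{w_1,\dots,w_f\}$. Double counting with $\delta(G)\ge\alpha n$ gives $\Omega(n)$ vertices $u_i$ with $|N_G(w_i)\cap N_G(u_i)|\ge\eta n$. Take a random copy $P_i$ of $F-x$ in this common neighbourhood, and a random copy of $F$ on $\{u_1,\dots,u_f\}$. Then $T=\bigcup_i(P_i\cup\{u_i\})$ is tiled by the sets $P_i\cup\{u_i\}$. Also $T\cup S$ is tiled by the sets $\{w_i\}\cup P_i$ together with $\{u_1,\dots,u_f\}$.

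All random edges avoid $S$, so $W$ is isolated in $F_{\mathrm{ran}}$ and $\Phi_{F_{\mathrm{ran}},W}=\Phi_{F_{\mathrm{ran}}\setminus W}$. The random edges form disjoint copies of $F-x$ and $F$, so $\Phi_{F_{\mathrm{ran}}\setminus W}\ge Cn$ follows from $e_{F'}\le d^*(F)(v_{F'}-1)$ exactly as in Lemma~\ref{lem:PropOfPhi}. There are $\Omega(n^{f^2})$ skeletons, and no regularity or merging is needed. Condition (i) is analogous: reserve a random $V_1$ in which every vertex has linearly many $G$-neighbours, and embed $F-x$ greedily into $N_G(s)\cap V_1$.

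The paper only cites this theorem, but this width-one comb is exactly the graph case of its construction, as sketched in its introduction. For $k\ge 3$ the partite skeletons and teeth of length $\lceil 1/\varepsilon\rceil$ are needed only because not every edge at the rooted vertex can then come from $H$.
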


A strengthening of this result was recently obtained in \cite{han2025perturbation}, in which the minimum degree condition was relaxed to a density condition for $F$ with $d^*(F) > 1$.
For hypergraphs ($k\ge 3$),
Krivelevich, Kwan, and Sudakov~\cite{KKS2} considered perfect matchings under a minimum codegree condition.
These results were later extended by Chang, Han, Kohayakawa, Morris, and Mota~\cite{chang2022factors}, who determined the required amount of random edges for a
certain class of $k$-graphs $F$ (including all $k$-partite $k$-graphs, $K_4^{(3)-}$, and the Fano plane) under a much weaker vertex degree condition. 
In this paper, we focus on the case of clique factors.


To formulate our main result, we first introduce two technical setups that capture the inequalities required in our arguments.
These conditions determine the range of parameters $(k,t)$ for which our method applies. Let $k\geq 3$ and $t\geq k$ be integers.

\begin{setup}\label{setup1} 
We say that $(k,t)$ satisfy Setup~\ref{setup1} if there exists some $0<\varepsilon<\min\left\{1,t/\binom{t+1}{k}\right\}$ such that \begin{align*}\binom{t}{k-1} \le 2 \left(\left\lceil\frac{t}{k-1}\right\rceil\right)^{\ell}\left(\left\lfloor\frac{t}{k-1}\right\rfloor\right)^{k-1-\ell}+\binom{t+1}{k}\frac{1-\varepsilon}{t},\end{align*} where $\ell\equiv t \pmod{k-1}$ is the unique integer with $0\le \ell\le k-2$.
\end{setup}

\begin{setup}\label{setup2} We say that $(k,t)$ satisfy Setup~\ref{setup2} if there exists some  
$0<\varepsilon<\min\left\{1,t/\binom{t+1}{k}\right\}$ such that 
\begin{align*}
 \binom{t}{k-1} \le  \left(\left\lceil\frac{t+1}{k}\right\rceil\right)^{m}\left(\left\lfloor\frac{t+1}{k}\right\rfloor\right)^{k-m}+\left(\left\lceil\frac{t+1}{k}\right\rceil\right)^{m-1}\left(\left\lfloor\frac{t+1}{k}\right\rfloor\right)^{k-m}+\binom{t+1}{k}\frac{1-\varepsilon}{t},\label{equ:condition2}
\end{align*} where $m\equiv (t+1) \pmod{k}$ is the unique integer with $0\le m\le k-1$.
\end{setup}

\textbf{Remark:}  Both  Setup~\ref{setup1} and Setup~\ref{setup2}  hold for every $t\ge3$ in the case $k=3$. For larger values of $k$, the validity of these conditions depends on the pair $(k,t)$. The admissible parameter ranges are summarized in Table~\ref{tab:summary}. 

\begin{table}[htbp]
\centering
\renewcommand{\arraystretch}{1.6}  
\begin{tabular}{|c|c|c|}
\hline
\multicolumn{1}{|c|}{} & \multicolumn{1}{c|}{\textbf{Setup~\ref{setup1}}} & \multicolumn{1}{c|}{\textbf{Setup~\ref{setup2}}} \\
\hline
$k = 3$ & $t \geq 3$ & $t \geq 3$ \\
\hline
$k = 4$ & $t \in \{  4, 5, 6\}$ & $t = 4,\ t \geq 19$ \\
\hline
$k = 5$ & $t =5$ & $ t \geq 87$ \\
\hline
$k \ge 6$ & $-$ & $ t \geq t_0(k)$ \\
\hline
\end{tabular}
\caption{Valid Cases for Setup~\ref{setup1} and Setup~\ref{setup2}.}
\label{tab:summary}
\end{table}

For any $(k,t)$ satisfying Setup~\ref{setup1} or~\ref{setup2}, we fix some choice of $\varepsilon=\varepsilon(k,t)$ which suffices for the defining inequality. We are now in a position to state our main theorem, which establishes the random perturbation threshold for the existence of a $K_{t+1}^{(k)}$-factor in dense $k$-graphs.

\begin{theorem}[Main result]\label{main}
Let  $t\geq k\geq 3$ be  integers such that one of the following holds:
\begin{itemize}
\item[$(i)$] $(k,t) \in \{((4,4),(4,5),(4,6),(5,5)\}$;
\item[$(ii)$] $t \ge t_0(k)$ for some $t_0(k)$, where $t_0(3)=3$.
\end{itemize} 
For every $\alpha>0$, there exists $C>0$ such that the following holds for sufficiently large $n\in(t+1)\mathbb{N}$.
Let $H$ be a $k$-graph on $n$ vertices with $\delta_{k-1}(H)\geq \alpha n$, and let $p= p(n)\geq Cn^{-{t/\binom{t+1}{k}}}$.
Then a.a.s.~$H\cup H^{(k)}(n,p)$ contains a $K_{t+1}^{(k)}$-factor.
\end{theorem}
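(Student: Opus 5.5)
The proof will use the absorption method, in the form used by Balogh--Treglown--Wagner and by Chang--Han--Kohayakawa--Morris--Mota. There will be four lemmas: an absorbing lemma (Lemma~\ref{lem:abs}), a connecting lemma (Lemma~\ref{lem:con}), an almost-cover lemma (Lemma~\ref{lem:cov}), and a lemma handling the leftover vertices (Lemma~\ref{lem:res}).

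Write $p_0=n^{-t/\binom{t+1}{k}}$. The key point is that $p_0$ is exactly the threshold at which a typical vertex $v$ lies in $\Theta(n^{t}p^{\binom{t+1}{k}})=\Theta(C^{\binom{t+1}{k}})$ copies of $K_{t+1}^{(k)}$ in $H^{(k)}(n,p)$ alone. That count is only constant, so the random edges cannot by themselves cover every vertex. Instead, copies of $K_{t+1}^{(k)}$ will be built using some edges from $H$ and the rest from the random graph.

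The basic gadget is as follows. Take $t+1$ vertices $v_1,\dots,v_{t+1}$ on which a fixed family of $(k-1)$-sets, together with codegree neighbourhoods in $H$, supplies a prescribed set of edges from $H$. The remaining edges of the clique must then come from $H^{(k)}(n,p)$.

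Setups~\ref{setup1} and~\ref{setup2} are precisely the counting inequalities needed for this.
\begin{itemize}
\item In Setup~\ref{setup1}, the $t$ vertices other than a root are split into $k-1$ nearly equal parts. The deterministic codegree condition then gives roughly $2\prod$ (sizes of parts) edges of the clique through the root.
\item In Setup~\ref{setup2}, the $t+1$ vertices are split into $k$ nearly equal parts. The edges with one vertex in each part can be taken from $H$ via a hypergraph version of supersaturation/greedy embedding under $\delta_{k-1}(H)\ge\alpha n$.
\end{itemize}
The number of edges through a fixed vertex that must be random is then at most $\binom{t}{k-1}-(\text{deterministic})\le \binom{t+1}{k}\frac{1-\varepsilon}{t}$. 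With this bound, Janson's inequality shows that for \emph{every} vertex $v$ (union bound over $n$ vertices) there are $\Omega(n^{t}p^{\text{random}})\ge n^{\varepsilon'}$ copies of $K_{t+1}^{(k)}$ containing $v$ in $H\cup H^{(k)}(n,p)$. Moreover, such copies can be found even after deleting a small linear set of vertices. I expect that Setup~\ref{setup1} drives the absorbing/local gadgets and Setup~\ref{setup2} drives the covering. The role of $t\ge t_0(k)$ is just that one of the inequalities holds.

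The steps, in order, are these.
\begin{enumerate}
\item Using the gadgets above and Janson's inequality, show that every $(t+1)$-set $S$ has $\Omega(n^{b}p^{c})$ absorbers, each a bounded-size structure spanning a $K_{t+1}^{(k)}$-factor both with and without $S$. Then select a random family to get an absorbing set $A$ of size $\gamma n$ that absorbs any set $W$ of size at most $\gamma' n$ with $|W|\in(t+1)\mathbb{N}$. Because the absorber counts are only polynomially large rather than linear, I expect to use the ``reservoir''/lattice-style absorption of~\cite{chang2022factors}, connecting local absorbers via Lemma~\ref{lem:con}, rather than the Rödl--Ruciński--Szemerédi counting directly.
\item Find an almost perfect $K_{t+1}^{(k)}$-tiling of $V\setminus A$. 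This follows from the JKV/Riordan-type result: $H^{(k)}(n,p)$ with $p\ge Cp_0$ and $C$ large has a $K_{t+1}^{(k)}$-tiling covering all but $\gamma' n$ vertices of any linear-size set, with a.a.s.\ uniformity over sets, via Janson's inequality plus a greedy argument. Equivalently, use the perturbed gadgets of Setup~\ref{setup2}.
\item Absorb the leftover into $A$.
\end{enumerate}

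The main obstacle is the absorbing step. There each individual vertex lies in only $n^{\varepsilon'}$ usable cliques, so absorbers must be built with the deterministic edges doing as much work as Setups~\ref{setup1}/\ref{setup2} allow. Concentration must also hold uniformly over all $(t+1)$-sets. I would first try to verify the Janson calculation for every subgraph of the absorber, checking that the exponent $t/\binom{t+1}{k}$ dominates the densities of all sub-configurations. This is exactly where the $\varepsilon$-slack in the setups is consumed.
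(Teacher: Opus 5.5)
Your outer framework matches the paper: an absorbing set, an almost-cover found in $H^{(k)}(n,p)$ alone, and deterministic skeletons completed by random edges subject to rooted density checks. But the step you flag as the main obstacle is never carried out, and your reading of the setups points to the wrong gadget.

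The inequalities of Setups~\ref{setup1} and~\ref{setup2} do not bound the random edges at a single root. They are exactly the condition $n^{t+2}p^{f}\ge n^{1+\varepsilon}$, at $p=n^{-t/\binom{t+1}{k}}$, for a \emph{switcher}: two vertices $v,w$ and a $t$-set $P$ such that both $P\cup\{v\}$ and $P\cup\{w\}$ span $K_{t+1}^{(k)}$, where $f$ counts the edges not supplied by $H$. This is the real obstruction: with no help from $H$, the switcher rooted at $v$ has expected count of order $n^{1-tk/(t+1)}\to 0$.

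The paper handles this as follows. It picks $w$ among the at least $\alpha n/2$ vertices whose links share $\eta n^{k-1}$ edges with $L_v$ (Proposition~\ref{prop:V0}, a double count using $\delta_{k-1}(H)\ge\alpha n$). Under Setup~\ref{setup1}, supersaturation then puts an almost-balanced complete $(k-1)$-partite $(k-1)$-graph on $P$ inside $L_v\cap L_w$. Under Setup~\ref{setup2}, Lemma~\ref{lemma:3.2} gives almost-balanced complete $k$-partite $k$-graphs on both $P\cup\{v\}$ and $P\cup\{w\}$. The factor $2$ in Setup~\ref{setup1} counts the forced edges through $v$ \emph{and} through $w$. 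Through a single root at most one such product can be forced, and a one-root gadget cannot absorb anything anyway. Conversely, a single rooted clique needs no setup at all: one edge of $H$ already gives $\Phi_{K',w}\ge Cn^{\varepsilon}$ (Lemma~\ref{l}$(ii)$), which is all the paper uses to cover a small leftover set from a reservoir.

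Second, a switcher by itself is not an absorber. When the setup inequality is nearly tight, a switcher rooted at one end extends in only about $n^{\varepsilon}$ ways. So an absorber that switches each $w_i\in S$ once and closes with a clique on the new ends has rooted count of order $n\cdot n^{(t+1)(\varepsilon-1)}\to 0$. The missing idea is to chain $s=\lceil 1/\varepsilon\rceil$ switchers from each $w_i$ before closing with a clique on the ends (the comb of Definition~\ref{def:com}). This gives $\Phi_{F\setminus W}\ge Cn$ and $\Phi_{F,W}\ge Cn^{\varepsilon}$ (Lemmas~\ref{Phi:abs1} and~\ref{Phi:abs2}). That is what Proposition~\ref{p1}, or any Janson-plus-union-bound version of it, needs in order to embed vertex-disjoint absorbers. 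Your ``connecting lemma'' would have to be exactly this chaining.

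Finally, your split ``Setup~\ref{setup1} for absorbing, Setup~\ref{setup2} for covering'' would require both setups at once, which contradicts your own remark that one suffices. For example, for $k=4$ and $t\ge 19$ only Setup~\ref{setup2} holds, so your plan would have no absorber there. In the paper the two setups are alternative constructions of the same switcher. The almost-cover uses only $\Phi_{K_{t+1}^{(k)}}\ge Cn$ plus a greedy argument.
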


We remark that Theorem~\ref{main} could be extended to cover all cases where $t=k-1$. In this case,  a $K_{t+1}^{(k)}$-factor is a perfect matching and the result was proved already by Chang, Han, Kohayakawa, Morris, and Mota~\cite{chang2022factors}. We also remark that Theorem~\ref{main} is tight in the sense that for all $0<\alpha<1/(t+1)$, there is some constant $c=c(\alpha)$ and an $n$-vertex $k$-graph $H$ with $\delta_{k-1}(H)\geq \alpha n$   such that if $p\leq cn^{-{t/\binom{t+1}{k}}}$ then a.a.s.\ $H\cup H^{(k)}(n,p)$ does \textit{not} contain a $K_{t+1}^{(k)}$-factor.  Indeed, one can take $H$ to be the hypergraph consisting of all edges touching some vertex set $U\subseteq V(H)$ of size $\alpha n$. In a $K_{t+1}^{(k)}$-factor, at most $|U|$ cliques could contain an edge of $H$ and so there would have to be $(1/(t+1)-\alpha)n$ copies of $K_{t+1}^{(k)}$ entirely contained in $H^{(k)}(n,p)$. A simple application of Chebyshev's inequality shows that a.a.s.\ this does not occur for an appropriately chosen constant $c>0$, see for example \cite[Theorem 4.9]{Janson}. 

Let us briefly outline the (very) high-level idea that underpins our proof and how we use both the deterministic and the random graph.  Write $K:=K_{t+1}^{(k)}$. 
A typical application of the absorption method needs the following: for a given vertex $v$, we want to control the number of sets $S\dot\cup \{w\}$ such that both $S\cup \{v\}$ and $S\cup \{w\}$ induce a copy of $K$.
To construct such structures, for the graph case, one can take $w$ that shares many common neighbors with $v$ in $H$, and choose a $t$-clique using random edges in their common neighborhoods.
This is no longer possible for $k\ge 3$.
Our strategy is to choose $w$ such that $v$ and $w$ are in many copies of (almost) balanced complete $k$-partite $k$-graphs, and then fill in the missing edges by random edges.
For large $t$, this saves roughly a $k!/k^{k}$ portion of edges which compensates the loss due to the fact that $v$ is fixed.
Nevertheless, this indeed covers all values of $t$ for $k=3$ and all but a constant number of $t$ for $k\ge 4$.

\medskip
\textbf{Organization:} 
In Section~\ref{s2}, we introduce some notation and the proof strategy for our main theorem.
In Section~\ref{s3}, we establish several preliminary results for random hypergraphs, which will be used in the proof of the absorbing lemma.
In Section~\ref{s4}, we prove our absorbing lemma.
Finally, we present a proof of the main theorem in Section~\ref{s5}.

\section{Overview and Notation}\label{s2}
\subsection{Proof ideas}
To find a  $K_{t+1}^{(k)}$-factor in~$H\cup H^{(k)}(n,p)$,  we use the ``absorption method'' pioneered by R\"{o}dl, Ruci\'nski, and Szemer\'{e}di~\cite{Rodl1}.  
 The proof consists of three main steps. First, we construct a small absorbing set. Next, after removing the absorbing set, we use the random hypergraph to obtain a collection of vertex-disjoint copies of $K_{t+1}^{(k)}$ covering the majority of vertices. Finally, the absorbing set is used to absorb the remaining vertices, completing a $K_{t+1}^{(k)}$-factor.

The main difficulty lies in constructing the absorbing set.
To apply the absorbing lemma from \cite{han2025perturbation}, which is based on Montgomery's bipartite-template method  \cite{M}, it suffices to construct many absorbers for every $(t+1)$-set $W$. For this purpose, we introduce a ``comb $\mathcal{C}_W$ chain'' rooted at $W$ as a $W$-absorber. 
Rather than constructing copies of $K_{t+1}^{(k)}$ directly, we first build suitable deterministic structures using only the dense hypergraph $H$. 
The minimum degree condition on $H$ implies that most vertices are good, meaning that for each such vertex, there are many other vertices sharing many common neighbors with it.
For every good vertex $v$, there are many partite graphs containing $v$ together with another  vertex.
These partite graphs serve as deterministic skeletons of the comb.

The random perturbation is then used to complete the missing edges of these deterministic skeletons, thereby turning them into copies of $K_{t+1}^{(k)}$. Depending on the admissible parameter pair $(k,t)$, we employ one of two deterministic constructions, corresponding to Setup~\ref{setup1} or Setup~\ref{setup2}. 
After verifying the required density conditions for the associated random parts, Proposition~\ref{p1} yields many vertex-disjoint embeddings of the random structures, which in turn produce the desired comb absorbers. 
This establishes the absorbing lemma and, together with the covering lemma, completes the proof of Theorem~\ref{main}.

\subsection{Notation}
In this subsection we introduce some notation used throughout the paper.
Given a $k$-graph $H=(V,E)$, we use $v_H$ and $e_H$ to denote the number of vertices and edges of $H$, respectively.
For an arbitrary vertex subset $U\subseteq V$, we denote by $H-U$ the induced subgraph $H[V \setminus U]$.
When $U = \{v\}$ is a singleton, we simply write $H-v$ instead of $H-\{v\}$.
The \emph{complement} $\overline{H}$ of $H$ is the $k$-graph on $V$ with edge set $\binom{V}{k}\setminus E$. 
For a vertex $v\in V$, its   \emph{neighborhood} is defined as $N_H(v): = \{ S \subseteq V : S \cup \{v\} \in E \}$.
The \emph{link} $L_v$ of $v$ is the $(k-1)$-graph with vertex set $V(L_v)=V\setminus\{v\}$ and edge set $E(L_v)=\left\{X\colon\,X\cup \{v\}\in E\right\}$.
A \emph{path} in $H$ from a vertex $u$ to a vertex $v$ is an alternating sequence of distinct vertices and distinct edges $u=v_0,e_1,v_1,e_2,v_2,\dots,e_m,v_m=v$ such that $v_{i-1},v_i\in e_i$ for each $i\in[m]$.
The $k$-graph $H=(V,E)$ is said to be \emph{connected} if for every pair of distinct vertices $u,v\in V$, there exists a path connecting them.
For any positive integers 
$m$ and $n$ with $m<n$, let $[m,n]$ denote the set of integers $\{m,m+1,\dots,n\}$.

Throughout the paper, we assume that $n$ is sufficiently large unless
stated otherwise, and we omit floor and ceiling functions unless necessary.
We write $\alpha\ll \beta\ll \gamma$ to mean that we can choose positive constants $\alpha,\beta,\gamma$ from right to left.
More precisely, there are two monotone functions $f$ and $g$ such that, given $\gamma$, whenever $\beta\le f(\gamma)$ and $\alpha\le g(\beta)$, the subsequent statements hold.
Hierarchies with more constants are defined analogously.
Implicitly, we assume that all constants appearing in a hierarchy are positive.

\section{Preliminaries}
\label{s3}
In this section, we collect some useful results on binomial random $k$-graphs $H^{(k)}(n, p)$.
\subsection{Rooted Subgraph counts in random hypergraphs}
Following~\cite{HMT2021,Janson}, given a $k$-graph $F$, we define $\Phi_F=\Phi_F(n,p):=\min\left\{n^{v_H}p^{e_H}\colon\,H\subseteq F,\, e_H>0\right\}$.
Here, we focus on the appearance of $k$-graphs in $H^{(k)}(n, p)$ where a subset of vertices may be fixed in place.
Therefore, for a $k$-graph $F$ and a subset of \emph{independent} vertices $W \subseteq V (F)$ (i.e.~$E(F[W])=\emptyset$), we define
\[
\Phi_{F,W}=\Phi_{F,W}(n,p):=\min\left\{n^{v_H-v_{H[W]}}p^{e_H}\colon\,H\subseteq F,\, e_H>0\right\}.
\]
It is easy to see that $\Phi_F=\Phi_{F,\emptyset}$ and $\Phi_{F\setminus W}\geq \Phi_{F,W}$ for any $k$-graph $F$ and independent set $W\subseteq V (F)$.
When $W=\{w\}$ for a single vertex $w\in V(F)$, we omit the brackets and simply write $\Phi_{F,w}$ and $\Phi_{F\setminus w}$.
Below, we collect several fundamental properties of $\Phi_F$ and $\Phi_{F,W}$  that will be used in later sections. 

\begin{lemma}\label{lem:PropOfPhi}
The following hold:
  \begin{enumerate}[label=\rmlabel]
  \item \label{fact:phiK_{t+1}} 
  Let $t\geq k\geq 3$ be integers, and let $C >1$ be a constant. If $p=p(n)\geq C n^{-t/\binom{t+1}{k}}$, then $\Phi_{K_{t+1}^{(k)}}\geq C n$.
  \item \label{lem:F12}
  Let $F_1$ and $F_2$ be $k$-graphs with $V(F_1)\cap V(F_2)=\{v\}$, and let $W_i\subseteq V(F_i)$ be such that $v\notin W_1\cup W_2$ for $i=1,2$.
Then
\[\Phi_{F_1\cup F_2,W_1\cup W_2}\geq \min\{\Phi_{F_1, W_1},\Phi_{F_2, W_2},\Phi_{F_1, W_1}\Phi_{F_2, W_2} n^{-1}\}.\]
  \item \label{lem:phi_disjoint}
  Let $F_1$ and $F_2$ be $k$-graphs with $V(F_1)\cap V(F_2)=\emptyset$, and let $W_i\subseteq V(F_i)$ for $i=1,2$. 
  Suppose $\Phi_{F_1, W_1},\Phi_{F_2, W_2}\ge 1$.
  Then
\[\Phi_{F_1\cup F_2,W_1\cup W_2}\geq \min\{\Phi_{F_1, W_1},\Phi_{F_2, W_2}\}.\]
\end{enumerate}
\end{lemma}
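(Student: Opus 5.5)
All three parts are direct computations with the definition of $\Phi_{F,W}$: a subgraph $H\subseteq F$ with $e_H>0$ contributes $n^{v_H-v_{H[W]}}p^{e_H}$. The plan is to take an arbitrary $H$ with $e_H>0$ in the relevant union and split it along the decomposition of $F$.

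\textbf{Part (i).} Let $H\subseteq K_{t+1}^{(k)}$ with $e_H>0$ and $h:=v_H$, so $k\le h\le t+1$. It suffices to take $H=K_h^{(k)}$, since deleting edges only increases $n^{h}p^{e_H}$. When $p<1$ we get
\[
n^hp^{\binom hk}\ge C^{\binom hk}\, n^{h-\binom hk t/\binom{t+1}{k}}.
\]
The key inequality is $h-1\ge \binom hk\, t/\binom{t+1}{k}$, i.e.\ $\binom hk/(h-1)\le \binom{t+1}{k}/t$. This holds because $d(K_h^{(k)})=\binom hk/(h-1)$ is nondecreasing in $h$ for $h\ge k$. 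To see this, compute
\[
\frac{\binom{h+1}{k}/h}{\binom hk/(h-1)}=\frac{(h+1)(h-1)}{(h+1-k)h}\ge 1,
\]
which holds since $k\ge 1$. Hence $n^hp^{e_H}\ge C^{\binom hk} n\ge Cn$, using $C>1$. If $p\ge 1$ (i.e.\ $p=1$), then $n^h\ge n^k\ge Cn$ for large $n$.

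\textbf{Part (ii).} Given $H\subseteq F_1\cup F_2$ with $e_H>0$, let $H_i:=H\cap F_i$ be the subgraph with vertex set $V(H)\cap V(F_i)$ and edges $E(H)\cap E(F_i)$. Then $e_H=e_{H_1}+e_{H_2}$, since every edge of $F_1\cup F_2$ lies in some $F_i$; if an edge lies in both, charge it to only one side. The hypothesis $v\notin W_1\cup W_2$ gives that $W_1\cup W_2$ meets $V(F_i)$ in exactly $W_i$, apart from the shared vertex $v$, which is in neither. Hence the count of non-root vertices satisfies
\[
v_H-v_{H[W]}=(v_{H_1}-v_{H_1[W_1]})+(v_{H_2}-v_{H_2[W_2]})-\mathbf 1[v\in V(H)].
\]
If both $e_{H_i}>0$, the term is at least $\Phi_{F_1,W_1}\Phi_{F_2,W_2}n^{-1}$. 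If only $e_{H_1}>0$, drop the $H_2$-side vertices that carry no edges. Removing isolated non-root vertices only decreases the exponent, so we may assume $H=H_1$ and obtain the bound $\Phi_{F_1,W_1}$; the case $e_{H_2}>0$ alone is symmetric. The term $n^{\ge 0}$ from any leftover vertices never hurts.

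\textbf{Part (iii).} Identical to part (ii) but with no shared vertex, so the exponents add exactly. When both sides carry edges, we get a product $\Phi_{F_1,W_1}\Phi_{F_2,W_2}\ge\min\{\Phi_{F_1,W_1},\Phi_{F_2,W_2}\}$, using that each factor is at least $1$.

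The only delicate point is part (ii): one must account correctly for the shared vertex $v$ (subtracting one exactly when it is present) and check that $W$-vertices are not double counted. The monotonicity computation in part (i) is routine.
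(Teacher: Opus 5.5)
Your proof is correct and follows essentially the same route as the paper. For (i) you reduce to induced cliques $K_h^{(k)}$ and use that $\binom{h}{k}/(h-1)$ is nondecreasing; for (ii) and (iii) you split a subgraph along $F_1$ and $F_2$, bound each side by $\Phi_{F_i,W_i}$, and lose a factor $n^{-1}$ only for the shared vertex. Two small remarks: the ratio condition $(h+1)(h-1)\ge (h+1-k)h$ is equivalent to $(k-1)h\ge 1$, so it needs $k\ge 2$ rather than $k\ge 1$ (harmless here since $k\ge 3$); and your separate treatment of the case where only one side carries edges is slightly more careful than the paper, whose ``otherwise'' case implicitly assumes both sides contain edges.
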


\begin{proof}
For~\ref{fact:phiK_{t+1}}, let $F$ be a subgraph of $K_{t+1}^{(k)}$ with $v_F$ vertices and $e_F>0$ edges. 
Without loss of generality, we assume
that $F$ is an induced subgraph, so $e_F=\binom{v_F}{k}$.
Since $C > 1$ and $e_F \ge 1$, we observe that
$n^{v_F}p^{e_F}\ge C n^{v_F}\left( n^{-t /\binom{t+1}{k}}\right)^{e_F}$.
Therefore, to prove $\Phi_{K_{t+1}^{(k)}}\geq C n$, it suffices to show  $e_F\le \binom{t+1}{k}(v_F-1)/t$.
Using that $v_F\le t+1$ and the fact that the function $\frac{1}{x-1}\binom{x}{k}$ is increasing, we get $e_F= \binom{v_F}{k} \le \binom{t+1}{k}\frac{v_F-1}{t}$, 
as required.

Now we prove \ref{lem:F12} and \ref{lem:phi_disjoint}.
Let $F'$ be a subgraph of $F_1\cup F_2$ with $v'$ vertices and $e'$ edges, where $e'>0$.
  For $i\in\{1,2\}$, 
  write $v_i:=|V(F')\cap V(F_i)|$, $e_i:=|E(F')\cap E(F_i)|$, and $s_i\coloneqq|V(F')\cap W_i|$.
  Let $s\coloneqq|V(F')\cap (W_1\cup W_2)|$. Note that $s=s_1+s_2$.
  If $V(F')\subseteq V(F_i)$ for some $i\in\{1,2\}$, then
  \[
     n^{v'-s}p^{e'}=n^{v_i-s_i}p^{e_i}\geq \Phi_{F_i,W_i};
  \]
  otherwise, we have $V(F')\cap V(F_i)\neq \emptyset$ for $i\in\{1,2\}$.
For \ref{lem:F12}, since $v'\geq v_1 + v_2 -1$ and~$e'=e_1+e_2$, we have
  \[
    n^{v'-s}p^{e'}\geq
    n^{v_1+v_2-1-s}p^{e_1+e_2}=(n^{v_1-s_1}p^{e_1})(n^{v_2-s_2}p^{e_2})n^{-1}\geq
    \Phi_{F_1,W_1}\Phi_{F_2,W_2} n^{-1}.
  \]
This shows that $\Phi_{F_1\cup F_2,W_1\cup W_2}\geq \min\{\Phi_{F_1, W_1},\Phi_{F_2, W_2},\Phi_{F_1, W_1}\Phi_{F_2, W_2} n^{-1}\}$.
For \ref{lem:phi_disjoint},  since $v'=v_1 + v_2$ and~$e'=e_1+e_2$, we have
  \[
    n^{v'-s}p^{e'}\geq
    n^{v_1+v_2-s}p^{e_1+e_2}=(n^{v_1-s_1}p^{e_1})(n^{v_2-s_2}p^{e_2})\geq
    \Phi_{F_1,W_1}\Phi_{F_2,W_2}.
  \]
  This shows that $\Phi_{F_1\cup F_2,W_1\cup W_2}\geq \min\{\Phi_{F_1, W_1},\Phi_{F_2, W_2}\}$ since $\Phi_{F_1, W_1},\Phi_{F_2, W_2}\ge 1$.
\end{proof}


Now let us bound $\Phi_{F\setminus W}$ and $\Phi_{F,W}$ for some given $k$-graphs $F$ with an independent subset $W$ of $V(F)$ fixed in place, which will be used to prove our absorbing lemma.

The \emph{complete $k$-partite $k$-graph} with vertex classes $V_1,\dots,V_k$ is the $k$-graph on $V= V_1 \cup\cdots \cup V_k$ in which every set $e\subseteq V$ with $|e\cap V_i| = 1$ for each $1\le i\le k$ is an edge. 
We say that a complete $k$-partite $k$-graph is \emph{almost-balanced} if $\left||V_i|-|V_j|\right|\le 1$ for all $1\le i,j\le k$ with $i\neq j$.

\begin{definition}\label{def1}
Let $K$ be an almost-balanced complete $(k-1)$-partite $(k-1)$-graph on $t$ vertices and let $v,w\notin V(K)$  be distinct vertices.
Denote by 
$E_1=\left\{\{x\}\cup e:x\in\{w,v\},e\in 
E(\overline{K})\right\}$ and $E_2=\binom{V(K)}{k}$.
Let $A$ be a $k$-graph with vertex set $V(A)\coloneqq V(K)\cup \{w,v\}$ and edge set $E(A)\coloneqq E_1\cup E_2$.
\end{definition}

\begin{lemma}\label{l}
Let $t\geq k\ge 3$  be integers, and let $C>1$ be a
constant. Let
$0<\varepsilon<\min\left\{1,t/\binom{t+1}{k}\right\}$ and suppose $p=p(n)\ge Cn^{-t/\binom{t+1}{k}}$. Then the following hold.
\begin{itemize}
\item[$(i)$] Suppose in addition that Setup~\ref{setup1} holds. 
Let $A$ be the $k$-graph from Definition~\ref{def1}, and set $W \coloneqq \{w\}$. 
Then $\Phi_{A} \ge Cn^{1+\varepsilon}$, $\Phi_{A\setminus w} \ge Cn$, and $\Phi_{A,w} \ge Cn^{\varepsilon}$. 
\item[$(ii)$]  Let $K'$ be any non-complete $k$-graph on $t+1$ vertices with $W=\{w\}\subseteq V(K')$. 
Then 
$\Phi_{K'}\ge Cn^{1+\varepsilon}$, $\Phi_{K'\setminus W}\ge Cn$ and
$\Phi_{K',W}\ge Cn^\varepsilon$.
\end{itemize}
\end{lemma}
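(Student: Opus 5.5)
The whole lemma reduces to exponent arithmetic. Write $\lambda\coloneqq t/\binom{t+1}{k}$, so $p\ge Cn^{-\lambda}$ and $\varepsilon<\min\{1,\lambda\}$. For any $k$-graph $F$ with $e_F\ge 1$ we have $n^{v_F}p^{e_F}\ge C n^{v_F-\lambda e_F}$, using $C>1$. In the rooted version the exponent becomes $v_F-|V(F)\cap W|-\lambda e_F$. It suffices to consider induced subgraphs, since deleting edges only increases $n^{v}p^{e}$. So I will prove one statement: every induced subgraph $F$ with $e_F>0$ of $A$ (respectively of $K'$) satisfies
\[
g(F)\coloneqq v_F-\lambda e_F\ \ge\ 1+\varepsilon .
\]
All three bounds follow from this. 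The bound $\Phi\ge Cn^{1+\varepsilon}$ is immediate. Since $1+\varepsilon\ge 1$, the bound for $A\setminus w$ (or $K'\setminus W$) follows. If $w\in V(F)$, subtracting $1$ leaves an exponent of at least $\varepsilon$; if $w\notin V(F)$, the exponent is already at least $1+\varepsilon\ge\varepsilon$.

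Throughout I use one tool. For $x\ge 0$ and a fixed integer $a$, the map $s\mapsto \binom{s+a}{k}$ (and sums of such binomials) is convex. Hence any function of the form $s+c-\lambda\cdot(\text{such a sum})$ is concave in $s$, and its minimum over an integer interval is attained at an endpoint. I also use the identities
\[
\binom{t+1}{k}=\binom{t}{k}+\binom{t}{k-1},\qquad t-\lambda\binom{t}{k}=\frac{tk}{t+1}\ge \frac94 .
\]

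For part $(ii)$, let $F$ be induced in $K'$ with $s=v_F$. If $s\le t$, then $e_F\le\binom{s}{k}$. By concavity, $s-\lambda\binom sk$ is minimised on $[k,t]$ at an endpoint. The value at $s=k$ is $k-\lambda\ge k-1\ge 2$, and at $s=t$ it is $tk/(t+1)\ge 9/4$. Both exceed $1+\varepsilon$ because $\varepsilon<1$. If $s=t+1$, then non-completeness gives $e_F\le\binom{t+1}{k}-1$, so $g(F)\ge 1+\lambda>1+\varepsilon$.

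For part $(i)$, let $S=V(F)\cap V(K)$ with $|S|=s$, and split by how many of $v,w$ lie in $F$. If neither does, $F$ is a clique on $s\le t$ vertices and the computation from $(ii)$ applies. If exactly one does, $F$ is contained in a clique on $s+1$ vertices, so $g(F)\ge s+1-\lambda\binom{s+1}{k}$. For $s\le t-1$ this is at least $1+\varepsilon$ by the same endpoint check (values $k-\lambda$ and $tk/(t+1)$). For $s=t$, the $P\ge1$ edges of $K$ are missing, where $P=\lceil t/(k-1)\rceil^{\ell}\lfloor t/(k-1)\rfloor^{k-1-\ell}$. This gives $g(F)\ge t+1-\lambda\bigl(\binom{t+1}{k}-1\bigr)=1+\lambda>1+\varepsilon$.

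If both $v$ and $w$ lie in $F$ and $s=t$, then $F=A$. Here
\[
e_A=\binom tk+2\Bigl(\binom t{k-1}-P\Bigr)=\binom{t+1}{k}+\binom{t}{k-1}-2P\le \binom{t+1}{k}\Bigl(1+\frac{1-\varepsilon}{t}\right)
\]
by Setup~\ref{setup1}. Hence $\lambda e_A\le t+1-\varepsilon$, and so $g(A)\ge 1+\varepsilon$; this is exactly where the setup is used. If both $v,w$ lie in $F$ and $s\le t-1$, no edge contains both $v$ and $w$. So $e_F\le\binom{s+2}{k}-\binom{s}{k-2}$, which is a convex combination of binomials in $s$. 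At $s=t-1$ this gives $g(F)\ge t+1-\lambda\binom{t+1}{k}+\lambda\binom{t-1}{k-2}\ge 1+\lambda$. At the smallest relevant $s=k-1$ there are at most $2$ edges, so $g(F)\ge k+1-2\lambda\ge k-1$. Concavity then covers the range in between.

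I expect the main obstacle to be this last case, where both roots are present but not all of $K$ is: the missing-edge count of $\overline K[S]$ is not monotone in a clean way. My plan is to bypass it with the crude bound that only the edges through both $v$ and $w$ are absent, together with the endpoint/concavity check. If that check fails somewhere, I would fall back to bounding $|E(\overline K[S])|\le\binom{s}{k-1}-P_S$, where $P_S$ is the number of edges of $K[S]$, and minimise over the almost-balanced part sizes of $S$.
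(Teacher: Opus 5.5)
Your proof is correct and is essentially the paper's argument. Both reduce to induced subgraphs, show $v_F-\lambda e_F\ge 1+\varepsilon$, use Setup~\ref{setup1} only for $F=A$, use non-completeness for subgraphs on $t+1$ vertices, and use a monotonicity/convexity check for smaller ones (the paper uses that $\binom{x}{k}/(x-k+1)$ is increasing). The case you flag as the main obstacle ($v,w\in F$, $s\le t-1$) disappears if you split by $v_F$ instead of by $S$, as the paper does: $v_F\le t$ is covered by $e_F\le\binom{v_F}{k}$, and $v_F=t+1$ by the fact that $A$ contains no $K_{t+1}^{(k)}$ (your convexity claim there is nonetheless right, since $\binom{s+2}{k}-\binom{s}{k-2}=\binom{s}{k}+2\binom{s}{k-1}$).
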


\begin{proof}
We first prove part~(i). Let $t\geq k \geq 3$ and $\varepsilon>0$ satisfy
    \begin{align}
        \binom{t}{k-1} \le 2 \left(\left\lceil\frac{t}{k-1}\right\rceil\right)^{\ell}\left(\left\lfloor\frac{t}{k-1}\right\rfloor\right)^{k-1-\ell}+\binom{t+1}{k}\frac{1-\varepsilon}{t},\label{equ:condition}
    \end{align}
    where $\ell \equiv t \pmod{k-1}$ is the unique integer such that $0 \le \ell \le k-2$.
    Note that the bounds for $\Phi_{A\setminus w}$ and $\Phi_{A,w}$ follow immediately from the stronger bound $\Phi_A \geq Cn^{1+\varepsilon}$. Thus, it suffices to prove $\Phi_A \geq Cn^{1+\varepsilon}$.
    
    Let $F$ be a subgraph of $A$ with $s$ vertices and $f > 0$ edges. 
    Without loss of generality, we assume that $F$ is an induced subgraph of $A$.
    Since $C>1$ and $f\ge 1$, we observe that $n^{s}p^{f} \ge Cn^s\left(n^{-t/\binom{t+1}{k}}\right)^f$. 
    Therefore, to prove $\Phi_A \ge Cn^{1+\varepsilon}$, it suffices to show $f \le \binom{t+1}{k}(s-1-\varepsilon)/t$. 
    Using the fact that $k\le s\le t+2$, we consider three cases for the size of $s$. 
 
\textbf{Case 1:} $s \le t$.
Because $\frac{1}{x-k+1}\binom{x}{k}$ is increasing, we have $\frac{1}{s-k+1}\binom{s}{k}\le \frac{1}{t-k+1}\binom{t}{k} = \frac{1}{t+1}\binom{t+1}{k}$.
Therefore, we have \[f \le \binom{s}{k}\le \binom{t+1}{k}\frac{s-k+1}{t+1} \le \binom{t+1}{k}\frac{s-2}{t}\] as $k\ge 3$.

\textbf{Case 2:} $s = t+1$.
Note that $F$ is a subgraph of  $A$ with $t+1$ vertices.
Since $A$ contains no copy of $K_{t+1}^{(k)}$, it follows that $f\le\binom{t+1}{k}-1$.
By $\varepsilon\le t/\binom{t+1}{k}$ and $s-1=t$,
we are done as 
\[f\le\binom{t+1}{k}-1\le \binom{t+1}{k}(s-1-\varepsilon)/t.\]

\textbf{Case 3:} $s = t+2$.
 It suffices to show $f\le \binom{t+1}{k} (t+1-\varepsilon)/t = \binom{t+1}{k}+\binom{t+1}{k}\frac{1-\varepsilon}{t}$.   
 We have
    \begin{align*}
        f &= 2\left(\binom{t}{k-1}-\left(\left\lceil\frac{t}{k-1}\right\rceil\right)^{\ell}\left(\left\lfloor\frac{t}{k-1}\right\rfloor\right)^{k-1-\ell}\right)+\binom{t}{k} \\
        &= \binom{t+1}{k}+\binom{t}{k-1}-2 \left(\left\lceil\frac{t}{k-1}\right\rceil\right)^{\ell}\left(\left\lfloor\frac{t}{k-1}\right\rfloor\right)^{k-1-\ell} \overset{\eqref{equ:condition}}{\le} \binom{t+1}{k}+\binom{t+1}{k}\frac{1-\varepsilon}{t},
    \end{align*}
as desired.

We now prove part~(ii). Let $K'$ be a non-complete $k$-graph on $t+1$ vertices. 
For every  subgraph $F\subseteq
K'$  with $s$ vertices and $f > 0$ edges,
Case 1 applies if $s\le t$. 
If $s=t+1$, then
$f\le\binom{t+1}{k}-1\le
\binom{t+1}{k}(s-1-\varepsilon)/t$ as in Case 2.
Therefore, we get  $\Phi_{K'}\ge Cn^{1+\varepsilon}$, and consequently
$\Phi_{K'\setminus W}\ge Cn$ and
$\Phi_{K',W}\ge Cn^\varepsilon$. 
\end{proof}

Next we present the definition of the $k$-graph $B$ and the corresponding estimates for $\Phi$, which are designed for Setup~\ref{setup2}.

\begin{definition}\label{def2}
Let $S$ be a set of $t$ vertices and let $v,w\notin S$  be distinct vertices.
Fix an almost-balanced complete $k$-partite
$k$-graph $S_v$ on $S\cup\{v\}$ and an almost-balanced complete
$k$-partite $k$-graph $S_w$ on $S\cup\{w\}$. We do not require
the vertex partitions of $S_v$ and $S_w$ to agree on $S$.
Define the $k$-graph $B=B(S;v,w;S_v,S_w)$ by
$V(B):=S\cup\{v,w\}$ and
\[E(B):=
\left(\binom{S\cup\{v\}}{k}
\cup
\binom{S\cup\{w\}}{k}\right)
\setminus
\left(E(S_v)\cup E(S_w)\right).\]
\end{definition}

 \begin{lemma}\label{l2}
Let $t\ge k\ge 3$ be integers, and let $C>1$ be a
constant. Let
$0<\varepsilon<\min\left\{1,t/\binom{t+1}{k}\right\}$ and suppose that Setup~\ref{setup2} holds with this value of $\varepsilon$.
For fixed $S_v$ and $S_w$, let $B$ be the $k$-graph defined in Definition~$\ref{def2}$, and set $W\coloneqq\{w\}$. 
If $p=p(n)\geq Cn^{-{t/\binom{t+1}{k}}}$, then $\Phi_{B}\ge Cn^{1+\varepsilon}$, $\Phi_{B\setminus w}\geq Cn$, and $\Phi_{B,w}\geq Cn^{\varepsilon}$. 
\end{lemma}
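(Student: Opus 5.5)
The plan follows the proof of Lemma~\ref{l}(i) almost verbatim. As there, it suffices to prove $\Phi_B\ge Cn^{1+\varepsilon}$. The other two bounds then follow at once: removing $w$ or fixing $w$ changes the exponent of $n$ by at most one for every subgraph. More precisely, every subgraph of $B\setminus w$ is a subgraph of $B$, and for $\Phi_{B,w}$ we lose at most a factor $n$.

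Take any induced subgraph $F\subseteq B$ with $s$ vertices and $f>0$ edges. Since $C>1$ and $f\ge 1$, we have $n^sp^f\ge Cn^s(n^{-t/\binom{t+1}{k}})^f$. So it suffices to show
\[
f\le \binom{t+1}{k}\frac{s-1-\varepsilon}{t}.
\]
Here $k\le s\le t+2$, and I would split into three cases.

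\textbf{Case 1: $s\le t$.} $F$ is a $k$-graph on at most $t$ vertices, so $f\le\binom{s}{k}$. The computation of Case~1 in Lemma~\ref{l} gives $f\le\binom{t+1}{k}(s-2)/t$, which suffices since $\varepsilon<1$.

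\textbf{Case 2: $s=t+1$.} Every $(t+1)$-subset of $V(B)$ misses $v$ or $w$, or is of the form $S'\cup\{v,w\}$ with $|S'|=t-1$.
\begin{itemize}
\item If it is $S\cup\{v\}$, then $B$ misses at least the edges of $S_v$ there; this is nonempty since $t+1\ge k$ forces every part of $S_v$ to be nonempty. The same holds for $S\cup\{w\}$.
\item Otherwise the set contains both $v$ and $w$, and no edge of $B$ contains both $v$ and $w$, so edges are missing.
\end{itemize}
Hence $F$ is not complete and $f\le\binom{t+1}{k}-1\le\binom{t+1}{k}(t-\varepsilon)/t$, using $\varepsilon\le t/\binom{t+1}{k}$.

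\textbf{Case 3: $s=t+2$, so $F=B$.} Here one must count exactly. Edges of $B$ inside $S$ are the $k$-sets of $S$ not in $E(S_v)\cup E(S_w)$. Edges of $B$ through $v$ number $\binom{t}{k-1}-e_v$, where $e_v$ is the number of edges of $S_v$ containing $v$; similarly for $w$. So
\[
f=\binom{t}{k}-|E(S_v[S])\cup E(S_w[S])|+2\binom{t}{k-1}-e_v-e_w .
\]
Since $\binom{t}{k}+\binom{t}{k-1}=\binom{t+1}{k}$, the required inequality $f\le\binom{t+1}{k}+\binom{t+1}{k}\frac{1-\varepsilon}{t}$ becomes
\[
\binom{t}{k-1}\le e_v+|E(S_v[S])\cup E(S_w[S])|+e_w+\binom{t+1}{k}\frac{1-\varepsilon}{t}.
\]
I would bound the union from below by $|E(S_v[S])|$. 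Then $e_v+|E(S_v[S])|=e(S_v)$, the number of edges of an almost-balanced complete $k$-partite $k$-graph on $t+1$ vertices, which equals $\lceil\frac{t+1}{k}\rceil^m\lfloor\frac{t+1}{k}\rfloor^{k-m}$.

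It remains to bound $e_w$ from below by $\lceil\frac{t+1}{k}\rceil^{m-1}\lfloor\frac{t+1}{k}\rfloor^{k-m}$, which is the main point to verify. If $w$ lies in a part of size $a$, then $e_w$ is the product of the other $k-1$ part sizes. This product is minimized when $w$ is in a largest part, i.e.\ a part of size $\lceil\frac{t+1}{k}\rceil$ when $m\ge1$, which gives exactly the stated expression. When $m=0$ all parts have equal size $(t+1)/k$, so $e_w=((t+1)/k)^{k-1}$, and the stated term should be read in the same way; I would check this boundary convention carefully against Setup~\ref{setup2}. Then Setup~\ref{setup2} gives the inequality.

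The main obstacle is this exact accounting in Case~3. One has to avoid double counting the $k$-sets inside $S$ removed by both $S_v$ and $S_w$, and the way around this is the inequality $|E(S_v[S])\cup E(S_w[S])|\ge |E(S_v[S])|$. One also has to identify correctly the minimum of $e_w$ over the position of $w$.
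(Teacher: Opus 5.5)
Your proposal is correct and follows the paper's proof essentially verbatim: the same reduction to $\Phi_B$, the same three cases on $s$, and in Case~3 the same bound via $e(S_v)$ plus the minimum number of edges of $S_w$ through $w$. Your Case~2 and your minimization of $e_w$ are spelled out more explicitly than in the paper. On your $m=0$ worry: there $\lceil\frac{t+1}{k}\rceil=\lfloor\frac{t+1}{k}\rfloor$, so the term in Setup~\ref{setup2} equals $(\frac{t+1}{k})^{k-1}$ exactly as written, and no special convention is needed.
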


\begin{proof}
 Let $t\geq k\geq 3$  be integers satisfying 
\begin{align}
 \binom{t}{k-1} \le  \left(\left\lceil\frac{t+1}{k}\right\rceil\right)^{m}\left(\left\lfloor\frac{t+1}{k}\right\rfloor\right)^{k-m}+\left(\left\lceil\frac{t+1}{k}\right\rceil\right)^{m-1}\left(\left\lfloor\frac{t+1}{k}\right\rfloor\right)^{k-m}+\binom{t+1}{k}\frac{1-\varepsilon}{t},\label{equ:condition2}
\end{align}
where $m\equiv (t+1) \pmod{k}$ is the unique integer such that $0\le m\le k-1$.
Note that the bounds for $\Phi_{B\setminus w}$ and $\Phi_{B,w}$ follow immediately from the stronger bound $\Phi_B \geq Cn^{1+\varepsilon}$. Thus, it suffices to prove $\Phi_B \geq Cn^{1+\varepsilon}$.

Let $F$ be a subgraph of $B$ with $s$ vertices and $f > 0$ edges.
Without loss of generality, we assume that $F$ is an induced subgraph of $B$.
Since $C>1$ and $f\ge 1$, we observe that $n^{s}p^{f} \ge Cn^s\left(n^{-t/\binom{t+1}{k}}\right)^f$. 
Therefore, to prove $\Phi_B \ge Cn^{1+\varepsilon}$, it suffices to show $f \le \binom{t+1}{k}(s-1-\varepsilon)/t$. 
Using the fact that $k\le s\le t+2$, we consider three cases for the size of $s$. 

\textbf{Case 1:} $s \le t$.
Since $k\ge 3$, we have
$f\le \binom{s}{k}\le\binom{t+1}{k}\frac{s-2}{t}$ as in Case 1 of Lemma \ref{l},
which implies the required bound as $s-2 < s-1-\varepsilon$.

\textbf{Case 2:} $s= t+1$.
Note that $F$ is a subgraph of  $B$ with $t+1$ vertices. Since $B$ contains no copy of $K_{t+1}^{(k)}$, it follows that $f\le\binom{t+1}{k}-1$.
By $\varepsilon\le t/\binom{t+1}{k}$ and $s-1=t$,
we are done as $f\le\binom{t+1}{k}-1\le \binom{t+1}{k}(s-1-\varepsilon)/t$.



\textbf{Case 3:} $s= t+2$.
We have
\begin{align*}
 f
 &\leq\binom{t+1}{k}- \left(\left\lceil\frac{t+1}{k}\right\rceil\right)^{m}\left(\left\lfloor\frac{t+1}{k}\right\rfloor\right)^{k-m}+\binom{t}{k-1}-\left(\left\lceil\frac{t+1}{k}\right\rceil\right)^{m-1}\left(\left\lfloor\frac{t+1}{k}\right\rfloor\right)^{k-m}\\
 &\overset{\eqref{equ:condition2}}{\le} \binom{t+1}{k}+\binom{t+1}{k}\frac{1-\varepsilon}{t}= \binom{t+1}{k}\frac{s-1-\varepsilon}{t}.\nonumber 
\end{align*}
This completes the proof.
\end{proof}

\subsection{The absorbing structures}

Now we define the absorbing structure in this paper.

\begin{definition}[Comb]\label{def:com}
Let $s$ and $t$ be integers.
For a set $W=\{w_1,\dots,w_{t+1}\}$ of size $t+1$,
a comb $\mathcal{C}_W$ rooted at $W$ is a $k$-graph defined as follows. 
Let $\{P_{i,j}\}_{i\in[t+1],j\in[s]}$ be a collection of pairwise disjoint $t$-sets $(\text{i.e., } |P_{i,j}|=t \text{ for all } i\in[t+1],j\in[s])$ and let $\{u_{i,j}\}_{i\in[t+1],j\in[s]}$ be a collection of distinct vertices, such that all of these sets and vertices are pairwise disjoint and disjoint from $W$.
The vertex set of $\mathcal{C}_W$ is given by
\[
V(\mathcal{C}_W)=\left(\bigcup\nolimits_{i\in[t+1],j\in[s]}\left(P_{i,j}\cup \{u_{i,j}\}\right)\right)\cup W,
\]
and we set $u_{i,0}\coloneqq w_i$ for each $i\in[t+1]$.
The edge set of $\mathcal{C}_W$ is given by
\[
E(\mathcal{C}_W)=\left(\bigcup\nolimits_{i\in[t+1],j\in[s]} \big(E_{i,j}\cup E_{i,j}'\big)\right)\cup E^\ast,
\]
where $E_{i,j}$, $E_{i,j}'$ and $E^\ast$ denote the edge sets of the copies of the complete $k$-graph $K_{t+1}^{(k)}$ spanned by $\{u_{i,j-1}\}\cup P_{i,j}$, by $P_{i,j}\cup \{u_{i,j}\}$, and by $\{u_{1,s},\dots,u_{t+1,s}\}$, respectively.
Furthermore, we call \(s\) the width of \(\mathcal{C}_W\), \(W=\{w_1,\dots, w_{t+1}\}\) the base set of \(\mathcal{C}_W\), and \(\{u_{1,s},\dots,u_{t+1,s}\}\) the comb back of \(\mathcal{C}_W\). 
\textup{(See Figure~\ref{fig:comb} for an illustration of the case $k=3$, $s=3$ and $t=2$.)}
\end{definition}


\begin{figure}[ht]
\centering
\begin{tikzpicture}[
    scale=0.95,
    vtx/.style={circle,fill=black,inner sep=1.6pt},
    root/.style={circle,fill=red!80!black,inner sep=2.2pt},
    tip/.style={circle,fill=blue!70!black,inner sep=2.2pt},
    he/.style={fill=red!35,draw=red!90,opacity=0.55,line join=round},
    sp/.style={fill=blue!55,draw=blue!95,opacity=0.55,line join=round},
    pbox/.style={draw=black!60,dashed,rounded corners=5pt,inner sep=6pt}
]
\def\dy{2.4}
\def\xw{0}
\def\xPa{1.5}\def\xua{3.0}
\def\xPb{4.5}\def\xub{6.0}
\def\xPc{7.5}

\def\xuI{9.0}    
\def\xuII{9.6}   
\def\xuIII{9.0}  

\foreach \i in {1,2,3}{
    \pgfmathsetmacro{\y}{(3-\i)*\dy}
    \node[root] (u\i0) at (\xw,\y) {};
    \node[vtx] (Pa\i x) at (\xPa,\y+0.30) {};
    \node[vtx] (Pa\i y) at (\xPa,\y-0.30) {};
    \node[vtx] (u\i1)   at (\xua,\y) {};
    \node[vtx] (Pb\i x) at (\xPb,\y+0.30) {};
    \node[vtx] (Pb\i y) at (\xPb,\y-0.30) {};
    \node[vtx] (u\i2)   at (\xub,\y) {};
    \node[vtx] (Pc\i x) at (\xPc,\y+0.30) {};
    \node[vtx] (Pc\i y) at (\xPc,\y-0.30) {};
}
\node[tip] (u13) at (\xuI,2*\dy)   {};   
\node[tip] (u23) at (\xuII,\dy)    {};   
\node[tip] (u33) at (\xuIII,0)     {};   

\begin{scope}[on background layer]
\foreach \i in {1,2,3}{
    \draw[he] (u\i0.center)--(Pa\i x.center)--(Pa\i y.center)--cycle;
    \draw[he] (Pa\i x.center)--(Pa\i y.center)--(u\i1.center)--cycle;
    \draw[he] (u\i1.center)--(Pb\i x.center)--(Pb\i y.center)--cycle;
    \draw[he] (Pb\i x.center)--(Pb\i y.center)--(u\i2.center)--cycle;
    \draw[he] (u\i2.center)--(Pc\i x.center)--(Pc\i y.center)--cycle;
    \draw[he] (Pc\i x.center)--(Pc\i y.center)--(u\i3.center)--cycle;
}
\draw[sp] (u13.center)--(u23.center)--(u33.center)--cycle;
\end{scope}

\foreach \i in {1,2,3}{
    \node[pbox,fit=(Pa\i x)(Pa\i y)] {};
    \node[pbox,fit=(Pb\i x)(Pb\i y)] {};
    \node[pbox,fit=(Pc\i x)(Pc\i y)] {};
}

\node[left=2pt of u10] {\small $w_1=u_{1,0}$};
\node[below=6pt of Pa1y] {\small $P_{1,1}$};
\node[below=2pt of u11]  {\small $u_{1,1}$};
\node[below=6pt of Pb1y] {\small $P_{1,2}$};
\node[below=2pt of u12]  {\small $u_{1,2}$};
\node[below=6pt of Pc1y] {\small $P_{1,3}$};
\node[right=1pt of u13] {\small $u_{1,3}=u_{1,s}$};

\node[left=2pt of u20] {\small $w_2=u_{2,0}$};
\node[below=6pt of Pa2y] {\small $P_{2,1}$};
\node[below=2pt of u21]  {\small $u_{2,1}$};
\node[below=6pt of Pb2y] {\small $P_{2,2}$};
\node[below=2pt of u22]  {\small $u_{2,2}$};
\node[below=6pt of Pc2y] {\small $P_{2,3}$};
\node[right=2pt of u23]  {\small $u_{2,3}=u_{2,s}$};

\node[left=2pt of u30] {\small $w_3=u_{3,0}$};
\node[below=6pt of Pa3y] {\small $P_{3,1}$};
\node[below=2pt of u31]  {\small $u_{3,1}$};
\node[below=6pt of Pb3y] {\small $P_{3,2}$};
\node[below=2pt of u32]  {\small $u_{3,2}$};
\node[below=6pt of Pc3y] {\small $P_{3,3}$};
\node[right=1pt of u33] {\small $u_{3,3}=u_{3,s}$};
\end{tikzpicture}
\caption{An illustration of the comb $\mathcal{C}_W$ with width $s=3$ rooted at 
$W=\{w_1,w_2,w_3\}$, shown for $k=3$ and $t=2$.
In this case $t+1=k=3$, so each copy of $K_{t+1}^{(k)}=K_3^{(3)}$ reduces to a
single $3$-edge (shaded triangles). Along each tooth, the
red triangles represent the edges $E_{i,j}$ and $E_{i,j}'$, which are spanned by
$\{u_{i,j-1}\}\cup P_{i,j}$ and $P_{i,j}\cup\{u_{i,j}\}$, respectively. The blue triangle at the top represents the edge $E^\ast$ spanned by the comb back
$\{u_{1,s},u_{2,s},u_{3,s}\}$.}
\label{fig:comb}
\end{figure}

Let $p\ge Cn^{-t/\binom{t+1}{k}}$ and let $H$ be an $n$-vertex $k$-graph with $\delta_{k-1}(H)\ge \alpha n$. To build vertex-disjoint copies of $K_{t+1}^{(k)}$, we may have to use both 
deterministic edges from $H$ and random edges from $H^{(k)}(n,p)$.
To find such vertex-disjoint copies, we will use the following partition of the edge set of the comb $\mathcal{C}_W$ rooted at $W$.

\begin{definition}[$H_{\text{det}}^1$ and $H_{\text{ran}}^1$]\label{def:H_{det}^1}
Let $\mathcal{C}_W$ be a comb rooted at a $(t+1)$-set $W$ with width $s$ as defined in Definition~\ref{def:com}, and let $K_{i,j}^{(k-1)}(t)$ be the complete $(k-1)$-partite $(k-1)$-graph on $t$ vertices with vertex classes $U_{i,j}^1\cup \cdots\cup U_{i,j}^{k-1}=P_{i,j}$ satisfying $\left||U_{i,j}^x|-|U_{i,j}^y|\right|\le 1$ for all $x,y\in [k-1]$. 
Then, we define $H_{\text{det}}^1=H_{\text{det}}^1(k,s,t)$ as the subgraph of $\mathcal{C}_W$ on the same vertex set with edge set $E\left(H_{\text{det}}^1\right)\coloneqq E_1\cup E_2$, where 
\[
E_1\coloneqq\left\{\{u_{i,j}\}\cup e\,\colon\,e\in E\left(K_{i,j+1}^{(k-1)}(t)\right)\right\} \mbox{ for } i\in[t+1],j\in[s-1]\cup\{0\},
\]
with $u_{i,0}\coloneqq w_i$; and
\[
E_2\coloneqq\left\{e\cup \{u_{i,j}\}\,\colon\,e\in E\left(K_{i,j}^{(k-1)}(t)\right)\right\} \mbox{ for } i\in[t+1],j\in[s].
\]
We further define $H_{\text{ran}}^1=H_{\text{ran}}^1(k,s,t)$ as the subgraph of $\mathcal{C}_W$ on the same vertex set such that 
\[E(H_{\text{ran}}^1)\coloneqq E\left(\mathcal{C}_W\right)\setminus E\left(H_{\text{det}}^1\right).
\]
\end{definition}

Now let us bound $\Phi_{H_{\text{ran}}^1\setminus W}$ and $\Phi_{H_{\text{ran}}^1,W}$ with an independent subset $W\subseteq V(H_{\text{ran}}^1)$ fixed in place, which will be used to prove our absorbing lemma.
For its proof, we use Lemma~\ref{l} to control the building blocks of $H_{\text{ran}}^1$, and concatenate the blocks by Lemma~\ref{lem:PropOfPhi}.

\begin{lemma}\label{Phi:abs1}
Suppose that Setup~\ref{setup1} holds and let $C >1$ be a constant.
Let $H_{\text{ran}}^1=H_{\text{ran}}^1(k,s,t)$ be the $k$-graph as defined in Definition~$\ref{def:H_{det}^1}$ with $s=\lceil 1/\varepsilon\rceil$ and $W=\{w_1,\dots,w_{t+1}\}$. 
If $p=p(n)\geq Cn^{-{t/\binom{t+1}{k}}}$, then $\Phi_{H_{\text{ran}}^1\setminus W}\ge Cn$ and $\Phi_{H_{\text{ran}}^1,W}\ge Cn^{\varepsilon}$.
\end{lemma}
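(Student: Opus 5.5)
The plan is to decompose $H_{\text{ran}}^1$ into building blocks that are copies of the graph $A$ from Definition~\ref{def1} (or subgraphs of it), and then glue the blocks together with Lemma~\ref{lem:PropOfPhi}.

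\textbf{Identifying the blocks.} For fixed $i\in[t+1]$ and $j\in[s]$, look at the two cliques spanned by $\{u_{i,j-1}\}\cup P_{i,j}$ and $P_{i,j}\cup\{u_{i,j}\}$. After removing the deterministic edges, what remains on $P_{i,j}\cup\{u_{i,j-1},u_{i,j}\}$ consists of three parts:
\begin{itemize}
\item all $k$-subsets of $P_{i,j}$;
\item the edges $\{x\}\cup e$ with $x\in\{u_{i,j-1},u_{i,j}\}$ and $e$ a $(k-1)$-set in $P_{i,j}$ that is not an edge of $K_{i,j}^{(k-1)}(t)$.
\end{itemize}
This is exactly a copy of $A$, with $K=K^{(k-1)}_{i,j}(t)$, $v=u_{i,j}$ and $w=u_{i,j-1}$. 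Call this block $A_{i,j}$. Each tooth $i$ is then a chain $A_{i,1}\cup\dots\cup A_{i,s}$, in which consecutive blocks share exactly the vertex $u_{i,j}$. Finally, the back $E^\ast$ is a copy of $K_{t+1}^{(k)}$ on $\{u_{1,s},\dots,u_{t+1,s}\}$, and it meets tooth $i$ only in $u_{i,s}$.

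\textbf{Bounding a single tooth.} Let $T_{i,r}=A_{i,1}\cup\dots\cup A_{i,r}$, rooted at $w_i$. I prove by induction on $r$ that
\[
\Phi_{T_{i,r},w_i}\ge C n^{\min\{r\varepsilon,1\}}
\qquad\text{and}\qquad
\Phi_{T_{i,r}\setminus w_i}\ge Cn.
\]
The base case $r=1$ is Lemma~\ref{l}(i). For the inductive step, attach $A_{i,r+1}$ at $u_{i,r}$ with empty root set and apply Lemma~\ref{lem:PropOfPhi}\ref{lem:F12}. Since $\Phi_{A}\ge Cn^{1+\varepsilon}$, this gives
\[
\Phi_{T_{i,r+1},w_i}\ge\min\bigl\{C n^{\min\{r\varepsilon,1\}},\;Cn^{1+\varepsilon},\;C^2n^{\min\{r\varepsilon,1\}+\varepsilon}\bigr\}\ge Cn^{\min\{(r+1)\varepsilon,1\}},
\]
using $C>1$. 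The same computation without the root, using $\Phi_{A\setminus w}\ge Cn$ for the first block and $\Phi_A\ge Cn^{1+\varepsilon}$ for the others, gives $\Phi_{T_{i,r}\setminus w_i}\ge Cn$. Because $s=\lceil1/\varepsilon\rceil$, the full tooth $T_i=T_{i,s}$ satisfies $\Phi_{T_i,w_i}\ge Cn$.

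\textbf{Attaching the teeth to the back.} Start from the clique $K^\ast$ on the back, for which $\Phi_{K^\ast}\ge Cn$ by Lemma~\ref{lem:PropOfPhi}\ref{fact:phiK_{t+1}}. Attach the teeth one at a time, each sharing only $u_{i,s}$ with what has been built so far, and apply Lemma~\ref{lem:PropOfPhi}\ref{lem:F12} at each step.
\begin{itemize}
\item \emph{Rooted version.} Write $G$ for the current graph, with all of its $w$'s as roots. Then
\[
\Phi\ge\min\{\Phi_G,\;\Phi_{T_i,w_i},\;\Phi_G\,\Phi_{T_i,w_i}n^{-1}\}\ge\min\{\Phi_G,\,Cn\}\ge Cn.
\]
This already exceeds the required $Cn^{\varepsilon}$.
\item \emph{Unrooted version.} Use $\Phi_{T_i\setminus w_i}\ge Cn$ in the same way to obtain $\Phi_{H_{\text{ran}}^1\setminus W}\ge Cn$.
\end{itemize}

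\textbf{Main obstacle.} The delicate step is the rooted induction along a tooth. There, the factor $n^{-1}$ in Lemma~\ref{lem:PropOfPhi}\ref{lem:F12} has to be offset by the extra $n^{\varepsilon}$ gained from $\Phi_A\ge Cn^{1+\varepsilon}$. This is why $s=\lceil 1/\varepsilon\rceil$ blocks are needed to accumulate a full factor of $n$. If the stronger bound $Cn$ cannot be obtained for the rooted quantity, the weaker $Cn^{\varepsilon}$ claimed in the statement still follows from the same inductive step, since every term in the minimum is at least $Cn^{\varepsilon}$.

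A secondary point to check is that in each application of Lemma~\ref{lem:PropOfPhi}\ref{lem:F12} the shared vertex is not a root. This holds because every shared vertex is some $u_{i,j}$ with $j\ge1$, while the roots are the $w_i=u_{i,0}$.
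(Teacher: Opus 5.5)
Your block decomposition and the unrooted bound $\Phi_{H_{\text{ran}}^1\setminus W}\ge Cn$ are fine and match the paper. The rooted part, however, has a genuine gap.

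\textbf{The tooth induction fails.} In the inductive step, Lemma~\ref{lem:PropOfPhi}\ref{lem:F12} gives $\min\{\Phi_{T_{i,r},w_i},\Phi_A,\Phi_{T_{i,r},w_i}\Phi_A n^{-1}\}$. You drop the first entry, but it is only bounded by $Cn^{\min\{r\varepsilon,1\}}$, so the bound never grows. This cannot be repaired. Since $\Phi$ is a minimum over \emph{all} subgraphs, $\Phi_{T_{i,r},w_i}\le \Phi_{A_{i,1},w_i}$ for every $r$, and a single rooted block can be far below $n$. For example, $(k,t)=(4,5)$ satisfies Setup~\ref{setup1}. With $p=Cn^{-1/3}$, the graph $A$ has $7$ vertices and $17$ edges, so taking all of $A_{i,1}$ rooted at $w_i$ gives $n^{6}p^{17}=C^{17}n^{1/3}$. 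Hence both $\Phi_{T_i,w_i}\ge Cn$ and your claimed $\Phi_{H_{\text{ran}}^1,W}\ge Cn$ are false in general.

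\textbf{The fallback also fails.} Suppose you only use $\Phi_{T_i,w_i}\ge Cn^{\varepsilon}$. Attaching the first tooth to the back still gives $Cn^{\varepsilon}$. After that, the only available bound is $\Phi_G\ge Cn^{\varepsilon}$, so for the second tooth the product term is bounded only by $C^2n^{2\varepsilon-1}=o(n^{\varepsilon})$. This is even below $1$ when $\varepsilon<1/2$, and every further tooth costs another factor $n^{-1}$.

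\textbf{The missing idea.} The gain $n^{s\varepsilon}$ is realised only by subgraphs that run along an entire tooth, and a minimum over all subgraphs of $T_i$ cannot record this. The paper therefore works with an arbitrary subgraph $F$ directly, as follows.
\begin{enumerate}[label=\rmlabel]
\item By Lemma~\ref{lem:PropOfPhi}\ref{lem:phi_disjoint}, one may assume $F$ is connected.
\item If $F$ misses the back, it lies in one tooth. There the chain gluing works, since all terms are at least $Cn^{\varepsilon}$.
\item If $F$ meets the back and contains $w_i$, connectivity forces $u_{i,1},\dots,u_{i,s}\in V(F)$. So every block $F\cap H_{i,j}$ contains an edge, and
\[
n^{v_{F_i}-1}p^{e_{F_i}}\ge Cn^{\varepsilon}\bigl(Cn^{1+\varepsilon}\bigr)^{s-1}n^{-(s-1)}\ge Cn .
\]
\item Tooth pieces not containing $w_i$ are also at least $Cn$. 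Gluing all tooth pieces to the back piece (itself at least $Cn$) loses one factor $n$ per tooth, so the total stays at least $Cn$.
\end{enumerate}

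You could instead stay within a gluing framework. This would mean carrying a doubly rooted bound for $T_i$, with roots $w_i$ and $u_{i,s}$, together with a variant of Lemma~\ref{lem:PropOfPhi}\ref{lem:F12} in which the shared vertex counts as a root on one side. Proving that doubly rooted bound still requires the same connectivity observation.
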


\begin{proof}
Let $\mathcal{C}_W$ be a comb rooted at $W=\{w_1,\dots,w_{t+1}\}$ with width $s=\lceil 1/\varepsilon\rceil$, where the comb back is $U=\{u_{1,s},\dots,u_{t+1,s}\}$.
Let $H_{\text{ran}}^1=H_{\text{ran}}^1(k,s,t)$ be the subgraph of $\mathcal{C}_W$ defined as in Definition~$\ref{def:H_{det}^1}$. 
For each $i\in[t+1]$, let
\[V_i\coloneqq\bigcup_{j\in[s]}\left(P_{i,j}\cup \{u_{i,j}\}\right)\cup \{w_i\},\quad H_i\coloneqq H_{\text{ran}}^1[V_i],\] and
\[H_{i,j}\coloneqq H_{\text{ran}}^1[\{u_{i,j-1}\}\cup P_{i,j}\cup \{u_{i,j}\}]\]
for $j\in[s]$, where $u_{i,0}\coloneqq w_i$.
Then each $H_{i,j}$ is isomorphic to the $k$-graph $A$ from Definition~\ref{def1}. By Lemma~\ref{l}, we have 
\[\Phi_{H_{i,j}}\ge Cn^{1+\varepsilon}, \quad \Phi_{H_{i,1}\setminus w_i}\ge Cn, \quad \Phi_{H_{i,1}, w_i}\ge Cn^{\varepsilon}.\] 
Moreover, $H_{\text{ran}}^1$ consists of a copy $H_{0}$ of $K_{t+1}^{(k)}$ on $U$, together with the $(t+1)s$ copies $H_{i,j}$ of $A$.

We first prove that $\Phi_{H_{\text{ran}}^1\setminus W}\ge Cn$.
Starting with $H_0$, we can construct $H_{\text{ran}}^1\setminus W$ by sequentially gluing, for each $i\in[t+1]$, the subgraphs
\[H_{i,s},H_{i,s-1},\dots,H_{i,2},H_{i,1}\setminus w_i,\]
where each newly added subgraph intersects the previously constructed subgraph in exactly one vertex.
By Lemma~\ref{lem:PropOfPhi}~\ref{fact:phiK_{t+1}}, we have $\Phi_{H_0}= \Phi_{K_{t+1}^{(k)}}\ge Cn$.
Combining this with the bounds $\Phi_{H_{i,1}\setminus w_i}\ge Cn$ and $\Phi_{H_{i,j}}\ge Cn^{1+\varepsilon}$, we can repeatedly apply Lemma~\ref{lem:PropOfPhi}~\ref{lem:F12} to obtain $\Phi_{H_{\text{ran}}^1\setminus W}\ge Cn$.

Next, we prove that $\Phi_{H_{\text{ran}}^1,W}\ge Cn^\varepsilon$.
Let $F$ be a subgraph of $H_{\text{ran}}^1$ with $v_F$ vertices and $e_F>0$ edges.
It suffices to show that $n^{v_F-|V(F)\cap W|}p^{e_F}\geq Cn^{\varepsilon}$.
By considering the connected components of $F$ and applying Lemma~\ref{lem:PropOfPhi}\ref{lem:phi_disjoint}, it suffices to consider the case where $F$ is connected.
We distinguish two cases according to whether $F$ meets the comb back $U$.

\textbf{Case 1:} $V(F)\cap U=\emptyset$.
Then $F\subseteq H_i$ for some $i\in[t+1]$, say $i=1$. If $w_1\notin V(F)$, then
$F\subseteq H_{\text{ran}}^1\setminus W$, and hence
\[n^{v_F-|V(F)\cap W|}p^{e_F}=n^{v_F}p^{e_F}
\ge \Phi_{H_{\text{ran}}^1\setminus W}
\ge Cn
\ge Cn^\varepsilon.\]
Otherwise, we have $w_1\in V(F)$.
Since $F$ is connected, it can be obtained by sequentially gluing the nonempty
subgraphs $F\cap H_{1,j}$ along the connecting vertices in
$\{u_{1,1},\dots,u_{1,s-1}\}$.
Using
\[
\Phi_{H_{1,1},w_1}\ge Cn^\varepsilon
\quad\text{and}\quad
\Phi_{H_{1,j}}\ge Cn^{1+\varepsilon}
\quad (j\in[2,s]),
\]
repeated applications of
Lemma~\ref{lem:PropOfPhi}\ref{lem:F12} yield
\[
n^{v_F-|V(F)\cap W|}p^{e_F}=n^{v_F-1}p^{e_F}\ge Cn^\varepsilon.
\]

\textbf{Case 2:} $V(F)\cap U\neq\emptyset$. 
For each $i\in[t+1]$, let $F_i:=F[V(H_i)\cap V(F)]$,
and define $\mathcal{I}\coloneqq\{i\in[t+1]:u_{i,s}\in V(F)\}$.
We claim that, for every $i\in\mathcal I$ with $e(F_i)>0$,
\[
n^{v_{F_i}-|V(F_i)\cap W|}p^{e_{F_i}}\ge Cn.
\]
Indeed, if $w_i\notin V(F_i)$, then \[
n^{v_{F_i}-|V(F_i)\cap W|}p^{e_{F_i}}
=
n^{v_{F_i}}p^{e_{F_i}}
\ge
\Phi_{H_{\mathrm{ran}}^1\setminus W}
\ge Cn.
\]
Suppose now that $w_i\in V(F_i)$.
Since $F$ is connected and $w_i,u_{i,s}\in V(F_i)$, the subgraph $F_i$ connects $w_i$ to $u_{i,s}$ and hence contains all vertices $u_{i,1},\dots,u_{i,s-1}$.
Thus, for every $j\in[s]$, the subgraph $F_{i,j}:=F_i\cap H_{i,j}$ contains at least one edge. Since consecutive $F_{i,j}$ intersect in exactly one vertex, we have
\[
\begin{aligned}
n^{v_{F_i}-1}p^{e_{F_i}}
&=
\left(n^{v_{F_{i,1}}-1}p^{e_{F_{i,1}}}\right)
\prod_{j=2}^s
\left(n^{v_{F_{i,j}}}p^{e_{F_{i,j}}}\right)
n^{-(s-1)}
\\
&\ge
\frac{Cn^\varepsilon
\left(Cn^{1+\varepsilon}\right)^{s-1}}
{n^{s-1}}=
C^s n^{s\varepsilon}
\ge Cn,
\end{aligned}
\]
where the last inequality follows from $C>1$ and
$s=\lceil1/\varepsilon\rceil$.
This proves the claim.

Finally, let $F_0:=F\cap H_0$ and $\mathcal I_F:=\{i\in\mathcal I:e(F_i)>0\}$.
If $e(F_0)=0$, then the connectivity of $F$ implies that all edges of
$F$ are contained in some $H_i$. Hence, the above claim gives
$n^{v_F-|V(F)\cap W|}p^{e_F}\ge Cn$.
Suppose now that $e(F_0)>0$. Since
$n^{v_{F_0}}p^{e_{F_0}}
\ge \Phi_{H_0}\ge Cn$,
and $F$ is obtained by gluing $F_0$ and the subgraphs $F_i$ with
$i\in\mathcal I_F$ along vertices in $U$, we obtain
\[
\begin{aligned}
n^{v_F-|V(F)\cap W|}p^{e_F}
&=
\left(n^{v_{F_0}}p^{e_{F_0}}\right)
\prod_{i\in\mathcal I_F}
\left(
n^{v_{F_i}-|V(F_i)\cap W|}p^{e_{F_i}}
\right)
n^{-|\mathcal I_F|}
\\
&\ge\left(Cn\right)^{|\mathcal I_F|+1}
n^{-|\mathcal I_F|}
\ge Cn .
\end{aligned}
\]

Thus, in either case,
$n^{v_F-|V(F)\cap W|}p^{e_F}\ge Cn^\varepsilon$.
Hence,
$\Phi_{H_{\text{ran}}^1,W}\ge Cn^\varepsilon$,
which completes the proof.
\end{proof}

Similarly, we present the second construction of the absorbers, working under Setup \ref{setup2}.

\begin{definition}[$H_{\text{det}}^2$ and $H_{\text{ran}}^2$]\label{def:H_{det}^2}
Let $\mathcal{C}_W$ be a comb rooted at $W$ with width $s$ as defined in Definition~\ref{def:com}. 
For each $i\in [t+1]$ and $j\in [s]$, define the left attachment $L_{i,j}$ as the almost-balanced $k$-partite $k$-graph on $\{u_{i,j-1}\}\cup P_{i,j}$, where $u_{i,0}\coloneqq w_i$; and the right attachment $R_{i,j}$ as the almost-balanced $k$-partite $k$-graph on $P_{i,j}\cup \{u_{i,j}\}$.
Then, we define $H_{\text{det}}^2=H_{\text{det}}^2(k,s,t)$ as the subgraph of $\mathcal{C}_W$ on the same vertex set with edge set 
\[E\left(H_{\text{det}}^2\right)\coloneqq \bigcup_{i\in[t+1],j\in[s]}\left(E(L_{i,j})\cup E(R_{i,j})\right).\] 
We further define $H_{\text{ran}}^2=H_{\text{ran}}^2(k,s,t)$ as the subgraph of $\mathcal{C}_W$ on the same vertex set such that 
\[E(H_{\text{ran}}^2)\coloneqq E\left(\mathcal{C}_W\right)\setminus E\left(H_{\text{det}}^2\right).
\]
\end{definition}

We now bound $\Phi_{H_{\text{ran}}^2\setminus W}$ and $\Phi_{H_{\text{ran}}^2,W}$ with a fixed independent subset $W\subseteq V(H_{\text{ran}}^2)$. This will be used to prove our absorbing lemma.
Its proof is almost identical to the one for Lemma \ref{Phi:abs1}. 

\begin{lemma}\label{Phi:abs2}
Suppose that Setup~\ref{setup2} holds and let $C >1$ be a constant.
Let $H_{\text{ran}}^2=H_{\text{ran}}^2(k,s,t)$ be the $k$-graph as defined in Definition~$\ref{def:H_{det}^2}$ with $s=\lceil 1/\varepsilon\rceil$ and $W=\{w_1,\dots,w_{t+1}\}$. 
If $p=p(n)\geq Cn^{-{t/\binom{t+1}{k}}}$, then $\Phi_{H_{\text{ran}}^2\setminus W}\ge Cn$ and $\Phi_{H_{\text{ran}}^2,W}\ge Cn^{\varepsilon}$.
\end{lemma}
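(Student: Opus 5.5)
We follow the proof of Lemma~\ref{Phi:abs1} essentially verbatim, replacing the building block $A$ of Definition~\ref{def1} by the block $B$ of Definition~\ref{def2}. Let $\mathcal{C}_W$ be the comb of width $s=\lceil 1/\varepsilon\rceil$ with comb back $U=\{u_{1,s},\dots,u_{t+1,s}\}$. For $i\in[t+1]$ define $V_i$ and $H_i:=H_{\text{ran}}^2[V_i]$ as before. For $j\in[s]$ set
\[H_{i,j}:=H_{\text{ran}}^2[\{u_{i,j-1}\}\cup P_{i,j}\cup\{u_{i,j}\}],\]
where $u_{i,0}:=w_i$. By Definition~\ref{def:H_{det}^2}, the edges of $H_{i,j}$ are exactly the $k$-subsets of $\{u_{i,j-1}\}\cup P_{i,j}$ or of $P_{i,j}\cup\{u_{i,j}\}$ that lie in neither $L_{i,j}$ nor $R_{i,j}$. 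So $H_{i,j}$ is a copy of $B(P_{i,j};u_{i,j-1},u_{i,j};L_{i,j},R_{i,j})$, and the partitions of $L_{i,j}$ and $R_{i,j}$ need not agree on $P_{i,j}$, which Definition~\ref{def2} explicitly allows.

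The roles of $v$ and $w$ in Definition~\ref{def2} are symmetric, so Lemma~\ref{l2} gives
\[\Phi_{H_{i,j}}\ge Cn^{1+\varepsilon},\qquad \Phi_{H_{i,1}\setminus w_i}\ge Cn,\qquad \Phi_{H_{i,1},w_i}\ge Cn^{\varepsilon}.\]
The deterministic part consists only of $k$-partite graphs on the sets $\{u_{i,j-1}\}\cup P_{i,j}$ and $P_{i,j}\cup\{u_{i,j}\}$. Hence $H_{\text{ran}}^2$ still contains the full clique $H_0\cong K_{t+1}^{(k)}$ on $U$, together with the $(t+1)s$ blocks $H_{i,j}$. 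Distinct blocks share at most one vertex, and they are arranged exactly as in the tree-like structure of Lemma~\ref{Phi:abs1}. We should also check that $W$ is independent in $H_{\text{ran}}^2$. This holds because each $w_i$ lies only in $H_{i,1}$, so no edge contains two vertices of $W$.

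For $\Phi_{H_{\text{ran}}^2\setminus W}\ge Cn$, we start from $H_0$, for which Lemma~\ref{lem:PropOfPhi}\ref{fact:phiK_{t+1}} gives $\Phi_{H_0}\ge Cn$. For each $i$ we then glue on $H_{i,s},\dots,H_{i,2},H_{i,1}\setminus w_i$ one at a time. Each new piece meets the current graph in one vertex, so Lemma~\ref{lem:PropOfPhi}\ref{lem:F12} applies at each step. The minimum it produces stays at least $Cn$, because each product term is at least $Cn\cdot Cn\,n^{-1}\ge Cn$ (or $Cn\cdot Cn^{1+\varepsilon}n^{-1}$).

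For $\Phi_{H_{\text{ran}}^2,W}\ge Cn^\varepsilon$, Lemma~\ref{lem:PropOfPhi}\ref{lem:phi_disjoint} reduces the bound to connected subgraphs $F$, and we use the same two cases.
\begin{itemize}
\item If $V(F)\cap U=\emptyset$, then $F$ lies in a single $H_i$. When $w_i\notin V(F)$ the previous bound applies. Otherwise we chain $\Phi_{H_{i,1},w_i}\ge Cn^\varepsilon$ with $\Phi_{H_{i,j}}\ge Cn^{1+\varepsilon}$ via Lemma~\ref{lem:PropOfPhi}\ref{lem:F12}.
\item If $V(F)\cap U\neq\emptyset$, consider any tooth $F_i$ containing both $w_i$ and $u_{i,s}$. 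Connectivity forces every $F_{i,j}=F_i\cap H_{i,j}$ to carry an edge, so
\[n^{v_{F_i}-1}p^{e_{F_i}}\ge Cn^\varepsilon (Cn^{1+\varepsilon})^{s-1}n^{-(s-1)}\ge C^sn^{s\varepsilon}\ge Cn.\]
Gluing these teeth to $F_0=F\cap H_0$ along $U$ then gives a lower bound of $(Cn)^{|\mathcal I_F|+1}n^{-|\mathcal I_F|}\ge Cn$.
\end{itemize}

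The main point needing care, rather than real difficulty, is the identification of $H_{i,j}$ with $B$, and in particular that Lemma~\ref{l2} holds for arbitrary, possibly mismatched, almost-balanced partitions. Its Case~3 count only uses that $S_v$ and $S_w$ contribute disjoint edge sets whose sizes are given by the almost-balanced formulas. They are disjoint because any edge of $S_v$ contains $v$ and any edge of $S_w$ contains $w$, whereas a $k$-set inside $S$ with $k\le t$ could a priori lie in both. We will double-check that the subtraction in Case~3 of Lemma~\ref{l2} counts exactly the removed edges containing $v$ or $w$, so that the bound is valid as used. The rest is identical to Lemma~\ref{Phi:abs1}.
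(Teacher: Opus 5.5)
Your proposal is correct and is essentially the paper's own argument: identify each $H_{i,j}$ with a copy of $B$ from Definition~\ref{def2}, take the local bounds from Lemma~\ref{l2}, and rerun the gluing and two-case analysis of Lemma~\ref{Phi:abs1} verbatim. One side remark is wrong but harmless: an edge of $S_v$ need only meet the part containing $v$, not $v$ itself, so $E(S_v)$ and $E(S_w)$ can overlap inside $S$ (Case~3 of Lemma~\ref{l2} instead works because edges of $B$ avoiding $w$ lie in $\binom{S\cup\{v\}}{k}\setminus E(S_v)$, while edges through $w$ must avoid the at least $\lceil\frac{t+1}{k}\rceil^{m-1}\lfloor\frac{t+1}{k}\rfloor^{k-m}$ edges of $S_w$ containing $w$), and in any case Lemma~\ref{l2} is already stated for arbitrary, possibly mismatched $S_v,S_w$, so nothing there needs re-checking.
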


\begin{proof}
The proof is identical to that of Lemma~\ref{Phi:abs1} after replacing the 
$k$-graph 
$A$ from Definition~\ref{def1} with 
$B$ from Definition~\ref{def2} as well as the use of Lemma~\ref{l} with Lemma~\ref{l2}.
For each $i\in[t+1]$ and $j\in[s]$, the induced subgraph $H_{\text{ran}}^2[\{u_{i,j-1}\}\cup P_{i,j}\cup\{u_{i,j}\}]$,
where $u_{i,0}\coloneqq w_i$, is isomorphic to a $k$-graph $B$ as
defined in Definition~\ref{def2}.
Thus, Lemma~\ref{l2} gives the same local bounds as Lemma~\ref{l} in the proof of Lemma~\ref{Phi:abs1}.
Since the global gluing structure of $H_{\text{ran}}^2$ is identical to that of $H_{\text{ran}}^1$, the remainder of the proof follows from the same decomposition and gluing arguments. We omit the repeated details here.
\end{proof}

\section{The absorbing Lemma}
\label{s4}
In this section, we present an absorbing lemma (Lemma~\ref{lem:absorbing}) proved in \cite{han2025perturbation}, which in turn builds on other previous works \cite{chang2022factors,HMT2021,M,nenadov2020ramsey}. 
Then the main work of this section, in Lemma \ref{lem:absa} and Lemma \ref{vertexcover}, is to verify the hypotheses in Lemma~\ref{lem:absorbing}. 


\begin{definition}
  \label{def:abs}
  \emph{Let $H$ be an $n$-vertex $k$-graph, and let $S\subseteq V(H)$ be a subset of size $t+1$. A subset $A \subseteq V(H)\setminus S$ is called an \emph{$S$-absorber} if both $H[A]$ and $H[A \cup S]$ contain $K_{t+1}^{(k)}$-factors.}
\end{definition}


The following lemma guarantees the existence of an absorbing set capable of absorbing any sufficiently small subset of vertices, provided the (robust) existence of absorbers.
We remark that, in the statement below, by saying that $H^*$ is
a random $k$-graph, we mean that $H^*$ is drawn from certain probability distribution (so not necessarily
$H^{(k)}(n,p)$).

\begin{lemma}[Absorbing Lemma, \cite{han2025perturbation}]\label{lem:absorbing}
Let $F$ be a  $k$-graph with $m$ vertices. Suppose $1/n\ll \xi \ll \gamma_1\ll \tau \ll \gamma \ll 1/m$.  
   For any $n$-vertex random $k$-graph $H^*$, suppose there is a constant $C_1>0$ such that  
   $H^*$  satisfies the following  properties:
\begin{itemize}
\item[$(i)$] There is a vertex set $V_1\subseteq V(H^*)$ with $\tau n/4 \le|V_1|\le \tau n$ such that for any $S\subseteq V(H^*)$ with $|S|\le \gamma_1 n$, a.a.s.~there exists a set $T\subseteq V_1\setminus S$ with $|T|=(m-1)|S|$ and $H^*[S\cup T]$ containing an $F$-factor. 
\item[$(ii)$]  Let $q\le \frac{120\tau}{\gamma_1}$ be a constant and  $\mathcal{S}_1, \dots,\mathcal{S}_q$ be  families of $m$-sets of  $V(H^*)$ such that each $\mathcal{S}_i$ is a family of pairwise disjoint sets. 
Let $\mathcal{S}_i=\{S_{i,1},\dots,S_{i,t_i}\}$, where $t_i=|\mathcal{S}_i|\le \gamma_1 n$. Then a.a.s.~there exist vertex-disjoint families of $C_1$-sets $\left\{T_{i,j}\,\colon\,  i\in [q] , j\in [t_i]\right\}$ in $V(H^*)\setminus \bigcup_{i\in [q], j\in [t_i]} S_{i,j}$  
 such that for any $i\in [q]$ and $j\in [t_i]$, $ T_{i,j}$ is an $S_{i,j}$-absorber. 
\end{itemize} 
   Then $H^*$ contains an absorbing set $V_A\subseteq V(H^*)$ of size at most $\gamma n$ such that, for every $R\subseteq V(H^*)\setminus V_A$ with $|R|\le \xi n$ such that $m$ divides $|V_A|+|R|$, a.a.s.~$H^*[V_A\cup R]$ contains an $F$-factor.   
\end{lemma}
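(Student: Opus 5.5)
This is the Absorbing Lemma quoted from \cite{han2025perturbation}, which builds on Montgomery's bipartite-template method \cite{M}. I would prove it along those lines in three stages: build a template, attach absorbers to it, and then absorb via the template's matching property.

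\emph{Stage one: the template.} Fix a bipartite template $T$ with parts $X\cup Y$ and flexible part $Z\subseteq Y$, with $|X|=\Theta(\gamma_1 n/m)$ vertices. The template has maximum degree at most $40$ and the following property: for every $Z'\subseteq Z$ with $|Z'|=|Z|/2$, the graph $T-Z'$ has a perfect matching. Using (i), reserve $V_1$, and pick disjoint $m$-sets inside $V(H^*)\setminus V_1$ to play the roles of the template vertices. For each $y\in Y$, choose an $m$-set $S_y$. For each $x\in X$, choose a vertex $v_x$ together with $m-1$ companions so that "$S_y$ versus $S_x$" differ in a controlled way. The template edges $xy$ then call for absorbers of the set formed by the relevant root vertices. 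Since $T$ has maximum degree at most $40$, its edge set splits into at most $q\le 120\tau/\gamma_1$ matchings. Each matching gives a family $\mathcal{S}_i$ of pairwise disjoint $m$-sets, and each family has size at most $\gamma_1 n$.

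\emph{Stage two: absorbers.} Apply (ii) to these $q$ families simultaneously. This gives vertex-disjoint $C_1$-sets $T_{i,j}$, each an $S_{i,j}$-absorber, and all of them avoid the template's $m$-sets. Let $V_A'$ be the union of all template sets and all absorbers. Its size is $O(C_1 q\gamma_1 n)\le \gamma n/2$, by the hierarchy $\tau\ll\gamma$. The key deterministic verification is then as follows. For any choice of $Z'$, a perfect matching $M$ of $T-Z'$ yields an $F$-factor of $V_A'\setminus \bigcup_{z\in Z'}(\text{set of }z)$. Each edge in $M$ uses the absorber $T_{i,j}$ together with its root set. Each template edge not in $M$ uses the $F$-factor of $H^*[T_{i,j}]$ alone.

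\emph{Stage three: absorbing $R$.} Put $V_A:=V_A'\cup V_1$. Given $R$ with $|R|\le\xi n$, first use (i) with $S=R$ to cover $R$ by $F$-copies using $(m-1)|R|$ vertices of $V_1$. Cover the rest of $V_1$ greedily, again via (i) applied in small batches, together with flexible vertices. The leftover must be arranged to consist exactly of $|Z|/2$ flexible sets, and divisibility by $m$ ensures the counts match. The template property then finishes the $F$-factor.

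The main obstacle is the bookkeeping in stage three. The uncovered part of $V_1$, together with $R$, must be converted exactly into the removal of $|Z|/2$ flexible sets. This requires choosing $|Z|$ proportional to $\tau n$ so that it dominates $\xi n$, and applying (i) to sets of size below $\gamma_1 n$. I would handle this by applying (i) to the union of $R$ and a suitable number of flexible vertices at once. A second difficulty is that all the "a.a.s." statements are used only a bounded number of times, so union bounds suffice.
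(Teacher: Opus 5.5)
\textbf{There is a genuine gap, in stage three.} The paper does not prove this lemma; it imports it from \cite{han2025perturbation}. So your sketch has to stand on its own.

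You put all of $V_1$ into $V_A$, but your template, including its flexible part, lies outside $V_1$ and has only $O(\gamma_1 n)$ vertices. After (i) covers $R$ by some $T\subseteq V_1$ with $|T|=(m-1)|R|$, at least $\tau n/4-(m-1)\xi n$ vertices of $V_1\setminus T$ remain, and none of your tools covers them:
\begin{itemize}
\item Hypothesis (i) only guarantees $T'\subseteq V_1\setminus S'$ for the set $S'$ you feed it. Nothing lets $T'$ avoid vertices used in earlier batches, so your ``small batches'' may collide.
\item Each batch consumes an uncontrolled part of $V_1$. You therefore cannot steer the process to exhaust $V_1$ exactly, let alone to finish with exactly $|Z|/2$ flexible sets, which in your design are not even in $V_1$.
\item A single application of (i) to $R$ together with flexible vertices covers at most $m\gamma_1 n\ll\tau n$ vertices.
\end{itemize}

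Stage one is also not coherent as written. For the matching-to-factor translation, one side of the template must consist of disjoint $(m-1)$-sets $A_x$ and the other of single vertices $b$. Then each template edge is the $m$-set $A_x\cup\{b\}$ handed to (ii). With $m$-sets on both sides, a matched edge is not an $m$-set that any absorber from (ii) can swallow, so your ``key deterministic verification'' has nothing to rest on. Your choice $|X|=\Theta(\gamma_1 n/m)$ also contradicts your later wish for $|Z|$ of order $\tau n$.

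\textbf{The missing idea.} The set where (i) deposits $T$ must itself be flexible, and the number of removed flexible vertices must be adjustable exactly. Here is one way to do it with the hypotheses as stated.
\begin{enumerate}
\item Let the flexible side $Z$ of Montgomery's template be a set of single vertices containing $V_1$, padded by fewer than $2m$ extra vertices so that $|Z|=2L$ with $2m\mid |Z|$.
\item Let the other side consist of $3L$ disjoint $(m-1)$-sets $A_x$, and let the remaining $2L$ vertices on the flexible side be single vertices; all of these lie outside $Z$.
\item Use (ii) to get absorbers for every template edge \emph{and} for every block $Q$ of a partition of $Z$ into $m$-sets. The at most $40$ matchings of the template and this partition, cut into pieces of size at most $\gamma_1 n$, give at most $120\tau/\gamma_1$ families.
\item Given $R$, apply (i) once with $S=R$ to obtain $T\subseteq V_1$. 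Choose $(L-|T|)/m$ blocks $Q$ disjoint from $T$; this is possible since $2L/m-|T|\ge (L-|T|)/m$, i.e.\ $L\ge(m-1)|T|$.
\item Cover each chosen block by $Q\cup T_Q$, and cover every other $T_Q$ by itself. Exactly $L$ flexible vertices remain, and the perfect matching of the template covers them together with the $A_x$'s and the non-flexible single vertices.
\end{enumerate}
Integrality is automatic: $|V_A|\equiv L\pmod m$, so the hypothesis $m\mid |V_A|+|R|$ is equivalent to $L-(m-1)|R|\equiv 0\pmod m$.
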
 

Therefore, it suffices to establish the assumptions in Lemma \ref{lem:absorbing} which then will produce an absorbing set.
To prepare for the construction of an absorbing set, the main tool is the following proposition from~\cite{CHL2023}, adapted from~\cite[Proposition 6.21]{HMT2021}, which provides a multi-round greedy embedding scheme for generating an absorbing set.

\begin{proposition}\cite[Proposition 5.2]{CHL2023} \label{p1}
Let $k,b,f,w,\ell,n,t=t(n)\in \mathbb{N}$ be such that $k\ge 2$, and suppose $1/n\ll 1/C,\gamma\ll \beta,1/f,1/b,1/w,1/k,1/\ell$.
Let $F$ be a labelled $k$-graph with base vertex set $W\subseteq V(F)$ such that $|W| =w$, $|V (F)\setminus W|=b$, $e(F[W]) = 0$ and $e(F)= f$.
Further, suppose that $p = p(n)$ is such that $\Phi_{F\setminus W} \geq Cn$ and $\Phi_{F, W}\geq Cn^{1/\ell}$.

Let $V$ be an $n$-vertex set, and let $U_1,\dots, U_t\subseteq V$ with $t\le \gamma n$ be pairwise disjoint and be such that $|U_i| = |W|$ for each $i\in[t]$.
Suppose that $\mathcal{F}_1,\dots, \mathcal{F}_t$ are families of ordered $b$-sets on $V$ such that $|\mathcal{F}_i|\geq \beta n^b$.
Then a.a.s.~there exists a collection of embeddings $\phi_1, \dots, \phi_t$ such that each $\phi_i$ embeds a copy of $F$ into $H^{(k)}(n, p)$ on $V$ with $W$ being mapped to $U_i$ and $V(F)\setminus W$ being mapped to a set in $\mathcal{F}_i$ which is vertex-disjoint with $\bigcup_{i\in[t]}U_i$.
Furthermore, for $i\neq j$ we have $\phi_i(V (F)\setminus W) \cap \phi_j(V (F) \setminus W) = \emptyset$.
\end{proposition}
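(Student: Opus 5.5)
The plan is to embed the copies greedily, one index $i$ at a time, and to reduce the dependence on earlier choices to two \emph{deterministic} a.a.s.\ properties of $H^{(k)}(n,p)$: a lower bound on the total number of rooted copies, and an upper bound on how many of them can pass through any one vertex. Then no union bound over the set of already used vertices is needed. For each $i\in[t]$ let $X_i$ be the number of embeddings $\phi$ of $F$ into $H^{(k)}(n,p)$ with $\phi(W)=U_i$ (in the prescribed order) and $\phi(V(F)\setminus W)\in\mathcal F_i$, disjoint from $U:=\bigcup_j U_j$. Since $|U|\le w\gamma n$ and $\gamma\ll\beta$, at least $\beta n^b/2$ members of $\mathcal F_i$ avoid $U$. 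Hence $\mathbb E X_i\ge (\beta/2)n^bp^f\ge(\beta/2)\Phi_{F,W}\ge (\beta C/2) n^{1/\ell}$.

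\textbf{Step 1 (lower tail).} Apply Janson's inequality to $X_i$. The $\Delta$-term is controlled in the standard way by $\Phi_{F,W}$: overlapping pairs of copies sharing a subgraph $F'$ with $e_{F'}>0$ contribute at most $(\mathbb E X_i)^2/\Phi_{F,W}$ up to constants. This gives $\mathbb P[X_i<\mathbb E X_i/2]\le\exp(-c\,\Phi_{F,W})\le \exp(-c'Cn^{1/\ell})$. A union bound over the at most $n$ indices $i$ shows that a.a.s.\ $X_i\ge(\beta/4)n^bp^f$ for all $i$.

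\textbf{Step 2 (no vertex is overused).} For a vertex $v\notin U$ and each $i$, let $Y_{i,v}$ count the rooted copies counted in $X_i$ whose non-root part contains $v$. Then $\mathbb E Y_{i,v}\le b\,n^{b-1}p^f\le b\,\mathbb E X_i/(\beta n)\cdot 2$. I would show that a.a.s., for all $i,v$, we have $Y_{i,v}\le K\,n^{b-1}p^f+n^{\theta}$, where $\theta<1/\ell$ is small and $K=K(b,f,\ell)$. This is the step where both hypotheses enter. Fixing $v$ as an extra root, the condition $\Phi_{F\setminus W}\ge Cn$ (applied to the subgraph spanned away from the roots) ensures that copies through $v$ cannot cluster on small rigid substructures. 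I would use a Spencer-type \emph{disjoint-families} / Kim--Vu upper-tail bound, in the form used in \cite[Section 6]{HMT2021}. The maximal number of copies sharing a nonempty extension is governed by ratios $\Phi_{F,W}/\Phi_{F',W}$, and the $\ell$-round structure (exponent $1/\ell$) lets one iterate over at most $\ell$ levels of nested extensions. Each level incurs at most a polylogarithmic or $n^{\theta/\ell}$ loss. I expect this upper-tail estimate to be the main obstacle, since plain Chernoff does not apply to the dependent counts $Y_{i,v}$ and the bound must hold simultaneously for all $v$ and $i$. If the direct approach stalls, I would fall back on sprinkling: write $H^{(k)}(n,p)\supseteq\bigcup_{r\le\ell}H^{(k)}(n,p/\ell)$, and let round $r$ handle the extension step in which only an $n^{1/\ell}$-fraction of growth is required.

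\textbf{Step 3 (greedy embedding).} Condition on the outcomes of Steps 1 and 2 and embed $\phi_1,\dots,\phi_t$ in order. At step $i$ the used set is $Z_i:=\bigcup_{j<i}\phi_j(V(F)\setminus W)$, with $|Z_i|\le b\gamma n$. The number of copies counted by $X_i$ that meet $Z_i$ is at most
\[
\sum_{v\in Z_i}Y_{i,v}\le b\gamma n\left(Kn^{b-1}p^f+n^{\theta}\right)\le bK\gamma\, n^bp^f+b\gamma n^{1+\theta}.
\]
Since $\gamma\ll\beta$, this is less than $(\beta/4)n^bp^f\le X_i$; the second term is absorbed once $n^bp^f\ge Cn\cdot n^{1/\ell}/n$ is compared appropriately, and if needed one lowers $\theta$ or uses $\Phi_{F\setminus W}\ge Cn$ to see $n^bp^f\gg n^{\theta}\cdot n$ in the relevant range. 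So some valid copy avoiding $Z_i$ survives, and we choose it as $\phi_i$. Because Steps 1 and 2 are statements about the random hypergraph alone, uniform over all $i$ and $v$, the greedy choices never require a fresh probabilistic argument. This gives the desired pairwise disjoint embeddings a.a.s.
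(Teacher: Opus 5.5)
The paper does not prove this proposition; it imports it from \cite{CHL2023} (adapted from \cite[Proposition 6.21]{HMT2021}), so I am judging your argument on its own. Step 1 is fine. Step 3 has a genuine gap, and a sharper Step 2 would not close it.

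\textbf{Why Step 3 fails.} The hypotheses give only $n^bp^f\ge\Phi_{F,W}\ge Cn^{1/\ell}$. The condition $\Phi_{F\setminus W}\ge Cn$ says nothing about $n^bp^f$, because $f$ also counts the edges meeting $W$. So your claim that $n^bp^f\gg n^{1+\theta}$ ``in the relevant range'' is false.

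The regime $n^bp^f=o(n)$ is exactly the one the paper uses. In Lemma~\ref{vertexcover}, with $F=K'$ and $b=t$, one has $n^bp^f=n^tp^{\binom{t+1}{k}-1}=\Theta(n^{t/\binom{t+1}{k}})=o(n)$ at $p=Cn^{-t/\binom{t+1}{k}}$. Meanwhile up to $\gamma_1 n$ indices are embedded, so the used set $Z_i$ has linear size.

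Then $b\gamma n^{1+\theta}\gg X_i$, and lowering $\theta$ does not help. Even the best conceivable bound $Y_{i,v}\le 1$ only gives $\sum_{v\in Z_i}Y_{i,v}\le |Z_i|\gg X_i$.

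Worse, a.a.s.\ all copies counted by $X_i$ lie on at most $bX_i=o(n)$ vertices. So some admissible set $Z$ with $|Z|\le b\gamma n$ blocks index $i$ completely. Hence no property of $H^{(k)}(n,p)$ alone, uniform over the used set, can make a fixed-order greedy succeed. The upper-tail estimate in Step 2 is therefore beside the point.

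\textbf{What is missing.} You need to use the randomness across many indices at once.
\begin{enumerate}[label=\rmlabel]
\item Fix a set $J$ of indices and a used set $Z$. Apply Janson's inequality to the union, over $i\in J$, of the rooted families avoiding $U\cup Z$.
\item The $U_i$ are disjoint and non-root parts avoid $U$. So two potential copies with different roots can share edges only inside their non-root parts. The cross terms of $\bar\Delta$ are therefore $O(\mu_J^2/(\beta\Phi_{F\setminus W}))$, and the same-root terms are $O(\mu_J\max_i\mu_i/(\beta\Phi_{F,W}))$.
\item This gives probability at most $\exp(-c\min\{|J|\Phi_{F,W},\Phi_{F\setminus W}\})\le\exp(-cC\min\{|J|n^{1/\ell},n\})$ that no $i\in J$ has a copy avoiding $Z$, where $c=c(\beta,b,f)$.
\item For $|J|\ge n^{1-1/\ell}$ this beats a union bound over all $J$ and all $Z$ with $|Z|\le b\gamma n$. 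So if at each step you embed \emph{some} index that is still embeddable (rather than the next one in a fixed order), at most $n^{1-1/\ell}$ indices are left over.
\item Then sprinkle fresh rounds of randomness. Conditioning on earlier rounds fixes the old used set. The newly used set has size $O(L)$, where $L$ is the current number of leftover indices, so the union bound costs only $\exp(O(L\log n))$.
\item Each round therefore shrinks the leftover by a factor of about $n^{-1/\ell}\log n$, and after about $\ell+1$ rounds nothing remains.
\end{enumerate}
This is where $\Phi_{F\setminus W}\ge Cn$, $\Phi_{F,W}\ge Cn^{1/\ell}$ and the parameter $\ell$ genuinely enter. Your sprinkling fallback points in this direction, but without the Janson bound over sets of indices $J$ it cannot work.
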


Let $K_{k}^{k}(m)$ denote the almost-balanced complete $k$-partite $k$-graph with $m$ vertices. Our proof of Lemma~\ref{lem:absa} is based on the following form of a lemma from \cite{lo2015f}. 

\begin{lemma}\cite[Lemma 4.2]{lo2015f}
\label{lemma:3.2}
Let $k,m \geq 2$ be integers and let $\eta>0$. Then there exist a constant $\beta_0 = \beta_0(k,m,\eta) > 0$ and an integer $n_0 = n_0(k,m,\eta)$ satisfying the following property. Let $H$ be a $k$-graph on $n \geq n_0$  vertices with $\delta_{k-1}(H)\ge \eta n$. 
If $|N(u)\cap N(v) |\ge \eta \binom{n}{k-1}$ for any two vertices $u,v \in V(H)$, then there are at least $\beta_0n^{m-1}$ $(m-1)$-sets $W$ such that $W\cap\{u,v\}=\emptyset$, and both $H[\{u\} \cup W]$ and $H[\{v\} \cup W]$ contain $K_{k}^{k}(m)$. 
\end{lemma}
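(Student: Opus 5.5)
We would prove Lemma~\ref{lemma:3.2} by a supersaturation argument built on the Erd\H{o}s theorem that dense $k$-graphs contain complete $k$-partite $k$-graphs, applied to a suitable auxiliary $(k-1)$-graph. The plan is to find, inside the common link of $u$ and $v$, a positive density of copies of an almost-balanced complete $(k-1)$-partite $(k-1)$-graph. We then extend each copy by one more class so that adjoining $u$ or $v$ gives an almost-balanced complete $k$-partite $k$-graph on $m$ vertices, with $u$ (respectively $v$) as a member of one class.

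Fix the class sizes $a_1\le\dots\le a_k$ of $K_k^k(m)$, with $a_1\ge1$; the vertex $u$ (or $v$) will be placed in a class of size $a_1$. The $(m-1)$-set $W$ will then consist of $W_1$ of size $a_1-1$ and $W_2,\dots,W_k$ of sizes $a_2,\dots,a_k$.

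First, let $G$ be the $(k-1)$-graph on $V\setminus\{u,v\}$ whose edges are the $(k-1)$-sets in $N(u)\cap N(v)$. By hypothesis $G$ has at least $\eta\binom{n}{k-1}-O(n^{k-2})$ edges. By Erd\H{o}s' supersaturation theorem for complete $(k-1)$-partite $(k-1)$-graphs, $G$ contains at least $c_1 n^{a_2+\dots+a_k}$ copies of the complete $(k-1)$-partite graph with classes $(W_2,\dots,W_k)$ of sizes $a_2,\dots,a_k$, where $c_1=c_1(k,m,\eta)>0$. For any such copy, every $k$-set consisting of $u$ or $v$ together with one vertex from each $W_i$ is an edge of $H$. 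Now assume first that $a_1=1$. Then $W_1=\emptyset$ and $W=W_2\cup\dots\cup W_k$ already works, since no edges containing two vertices of the same class are required. Ordering effects cost only a constant factor, so we obtain at least $\beta_0 n^{m-1}$ such sets.

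When $a_1\ge2$, we must also add $a_1-1$ vertices $x$ such that every transversal $\{x\}\cup\{y_2,\dots,y_k\}$ with $y_i\in W_i$ is an edge. To handle this, count $(k-1)$-partite copies together with the additional vertices. Consider the $k$-partite structure formed by edges of $H$ that meet $G$-edges. By the codegree condition, each edge $e$ of $G$ lies in at least $\eta n$ edges of $H$. Hence the $k$-graph $H'$ on $V\setminus\{u,v\}$ consisting of edges of $H$ that contain an edge of $G$ has at least $c_2 n^k$ edges, with $c_2$ depending on $\eta$.

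We then apply supersaturation once more, now to $H'$, to find $c_3 n^{m-1}$ copies of the complete $k$-partite $k$-graph with classes of sizes $a_1-1,a_2,\dots,a_k$. The difficulty is that we need all transversal $(k-1)$-subsets avoiding $W_1$ to lie in $G$, not merely in $H'$. We would handle this with a two-level dependent random choice. Take an auxiliary $k$-partite $k$-graph whose edges are $(x,e)$ with $e\in G$ and $\{x\}\cup e\in H$; this has density at least $\eta^2$ in the appropriate $k$-partite sense, after passing to a random $k$-partition of $V\setminus\{u,v\}$. Applying Erd\H{o}s' theorem in its $k$-partite supersaturated form to this structure yields $\Omega(n^{m-1})$ copies of $K(a_1-1,a_2,\dots,a_k)$ in which each transversal $k$-set is an edge of $H$ and its $(k-1)$-face avoiding the first class is in $G$. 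For this, we encode the condition as a single $k$-graph $H''$, whose edges are the $k$-sets $\{x\}\cup e$ with $x$ in the first part, $e\in G$, and $\{x\}\cup e\in H$, and we note that $e(H'')\ge\eta^2 n^k/k^k$.

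The main obstacle is this last step: making sure the $(k-1)$-faces avoiding the first class lie in $N(u)\cap N(v)$ simultaneously with the $k$-edges, so that adjoining $u$ or $v$ to $W_1$ yields the full complete $k$-partite $k$-graph. The encoding into the single partite $k$-graph $H''$ is what we would try first. Supersaturation in $H''$ then directly gives $\beta_0 n^{m-1}$ sets $W$, after discarding the $O(n^{m-2})$ sets that meet $\{u,v\}$.
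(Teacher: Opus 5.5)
Your proof is correct, but it does not follow the paper, which gives no argument of its own. The paper imports the statement from Lo and Markstr\"om, where it is proved for the balanced $k$-partite $k$-graph with $m$ vertices in every class, and asserts that the almost-balanced form ``follows from simple averaging''. You instead prove the almost-balanced version directly. For $a_1=1$ you use supersaturation in the common link $G$, and for $a_1\ge 2$ you use partite supersaturation in the auxiliary $k$-graph $H''$.

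Your route has a real advantage: one partition $(\{u\}\cup W_1,W_2,\dots,W_k)$ serves $u$ and $v$ at the same time, because every transversal of $W_2,\dots,W_k$ lies in $N(u)\cap N(v)$. The reduction from the balanced statement, taken literally, must handle the fact that the $k$-partitions of $W$ witnessing the copies in $H[\{u\}\cup W]$ and in $H[\{v\}\cup W]$ may differ. One then has to extract an $(m-1)$-subset that is almost balanced with respect to both partitions at once. This is possible, e.g.\ by writing the $k\times k$ matrix of intersection sizes as a sum of permutation matrices, but it is a little more than averaging. The paper's route is shorter on the page only because the work is outsourced to the cited lemma.

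Two points should be made explicit in a write-up.
\begin{enumerate}
\item In $H''$ you must require $e$ to be a transversal of $V^2,\dots,V^k$, not merely $e\in G$. This makes $H''$ genuinely $k$-partite and makes $x$, hence the face $e$, determined by the edge. It also gives $\mathbb{E}[e(H'')]\ge \frac{(k-1)!}{k^k}\,\eta\binom{n}{k-1}(\eta n-2)$, so some partition achieves this bound.
\item Supersaturation with classes in prescribed parts follows by finding $\Omega(n^{ka_k})$ copies of $K(a_k,\dots,a_k)$ in $H''$. In a $k$-partite $k$-graph each class of a complete $k$-partite copy lies inside a single part. So after relabelling you can trim to $W_1\subseteq V^1$ of size $a_1-1$ and $W_i\subseteq V^i$ of size $a_i$. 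Each $W$ arises from $O(n^{ka_k-m+1})$ copies, which gives $\Omega(n^{m-1})$ distinct sets $W$.
\end{enumerate}
The detours through $H'$ and ``dependent random choice'' play no role and can be dropped. The case $m<k$ (where $a_1=0$) is vacuous, since $K_k^k(m)$ then has no edges.
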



Indeed, the lemma above was proved in \cite[Lemma 4.2]{lo2015f} for balanced complete $k$-partite $k$-graphs; the current form we need follows from simple averaging.

\subsection{Verification of condition~$(ii)$  of Lemma~\ref{lem:absorbing}}
We prove in Lemma \ref{lem:absa} that under appropriate conditions on $p$ and the minimum codegree of $H$, the union $H \cup H^{(k)}(n,p)$ a.a.s.~contains many $S$-absorbers for every $(t+1)$-subset $S \subseteq V(H)$. 

For a vertex $v\in V(H)$, we define $\tilde{N}_{\eta}(v)$ as the set of all vertices $u\in V(H)\setminus \{v\}$  such that  $|N_H(u) \cap N_H(v) |\ge \eta  n^{k-1}$.
The following result provides a lower bound on the size of $\tilde{N}_{\eta}(v)$ in $H$.
\begin{proposition} \label{prop:V0}
   Let $k\ge 3$ and suppose $1/n\ll\eta\ll\alpha,1/k$.
Let $H=(V,E)$ be an $n$-vertex $k$-graph with $\delta_{k-1}(H)\ge \alpha n$.
Then, for any $v \in V$, $|\tilde{N}_{\eta}(v)| \geq \alpha n/2$.
\end{proposition}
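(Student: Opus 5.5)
We double count pairs $(u,S)$ with $u\in V\setminus\{v\}$ and $S\in N_H(u)\cap N_H(v)$, bounding the total from below by the codegree condition and from above by splitting the vertices $u$ according to whether or not $u\in\tilde N_\eta(v)$. The key point is to use $|N_H(v)|$ itself, not $\binom{n-1}{k-1}$, as the upper bound for each term. This is what gives the bound $\alpha n/2$ rather than something of order $\alpha^2 n$.

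\emph{Lower bound on $|N_H(v)|$.} Fix $v$. Every $(k-1)$-set $T$ containing $v$ lies in at least $\alpha n$ edges. Each edge through $v$ contains exactly $k-1$ such sets $T$. Hence
\[
|N_H(v)|=\deg_H(v)\ \ge\ \frac{\alpha n}{k-1}\binom{n-1}{k-2}\ \ge\ c_k\,\alpha\, n^{k-1}
\]
for some constant $c_k>0$ depending only on $k$, since $n$ is large.

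\emph{Lower bound on the total.} We have
\[
\Sigma:=\sum_{u\in V\setminus\{v\}}|N_H(u)\cap N_H(v)| \;=\;\sum_{S\in N_H(v)}\bigl|\{u\in V\setminus\{v\}:\ S\cup\{u\}\in E\}\bigr|.
\]
Fix $S\in N_H(v)$; note $v\notin S$. The vertices $u$ with $S\cup\{u\}\in E$ number $\deg_H(S)\ge\alpha n$, and one of them is $v$. Discarding $v$ leaves at least $\alpha n-1$ vertices $u$. Therefore $\Sigma\ge |N_H(v)|(\alpha n-1)$.

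\emph{Upper bound on the total.} Write $X:=|\tilde N_\eta(v)|$. If $u\in\tilde N_\eta(v)$, we bound the term by $|N_H(u)\cap N_H(v)|\le |N_H(v)|$. If $u\notin\tilde N_\eta(v)$, then by definition the term is less than $\eta n^{k-1}$. This gives
\[
\Sigma\ \le\ X\,|N_H(v)| + n\cdot \eta n^{k-1}.
\]

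\emph{Conclusion.} Comparing the two bounds on $\Sigma$ and dividing by $|N_H(v)|$,
\[
X\ \ge\ \alpha n-1-\frac{\eta n^{k}}{|N_H(v)|}\ \ge\ \alpha n-1-\frac{\eta}{c_k\alpha}\,n\ \ge\ \frac{\alpha n}{2},
\]
where the last inequality uses $\eta\ll\alpha,1/k$ (for instance $\eta\le c_k\alpha^2/4$) and $1/n\ll\eta$.

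There is no real obstacle here; the only care needed is bookkeeping. First, the sets in $N_H(v)$ never contain $v$, so $S\cup\{u\}$ is a genuine $k$-set whenever $u\notin S$. Second, the single vertex $u=v$ is subtracted from each codegree. Third, we must use the bound $|N_H(u)\cap N_H(v)|\le|N_H(v)|$ rather than $\binom{n-1}{k-1}$ to get the stated constant $\alpha/2$.
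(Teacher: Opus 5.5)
Your proof is correct and follows essentially the paper's argument: the same double count of pairs $(S,u)$ with $S\in N_H(v)$, split according to whether $u\in\tilde N_\eta(v)$, combined with the same bound $|N_H(v)|\ge\alpha\binom{n}{k-1}$ obtained from the codegree condition. Your explicit removal of $u=v$ (the $-1$) is slightly more careful than the paper's display. There, $\sum_{S\in N_H(v)}\deg_H(S)$ also counts $u=v$, which contributes $|N_H(v)|$, and this term is silently dropped from the upper bound; the omission costs only an additive $1$ and does not affect the conclusion.
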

\begin{proof}
Since $\delta_{k-1}(H)\ge \alpha n$, it follows that $\delta_{1}(H)\ge \frac{1}{k-1}\binom{n-1}{k-2}\alpha n =\alpha\binom{n}{k-1}$. 
Fix a vertex $v \in V$.
For any vertex $u$, by definition $u \in \tilde{N}_{\eta}(v)$ if and only if $|N_H(u) \cap N_H(v)| \geq \eta  n^{k-1}$. By a double-counting argument, we have
    \[
    |N_H(v)|\cdot \delta_{k-1}(H) \leq \sum_{S \in N_H(v)} \deg_H(S) < |\tilde{N}_{\eta }(v)| \cdot |N_H(v)| + n \cdot \eta  n^{k-1}.
    \]
Since $|N_H(v)| \geq \delta_1(H) \geq \alpha\binom{n}{k-1}$ and $\eta\ll\alpha$, we conclude that
    \[
    |\tilde{N}_{\eta}(v)| > \delta_{k-1}(H) - \frac{\eta n^k}{|N_H(v)|} \geq \alpha n/2. \qedhere
    \]
\end{proof}
We are now ready to establish condition~$(ii)$  of Lemma~\ref{lem:absorbing} in our setting.

\begin{lemma}
  \label{lem:absa}
Given Setup~\ref{setup1} or Setup~\ref{setup2}, suppose $1/n\ll\gamma_1\ll\tau\ll\beta\ll\varepsilon\ll \alpha,1/k,1/t$.
Let $H=(V,E)$ be an $n$-vertex $k$-graph with $\delta_{k-1}(H)\ge \alpha n$.
  Then there is a constant $C = C(\alpha,\varepsilon) > 0$ such that if $p\geq Cn^{-t/\binom{t+1}{k}}$, then $H^{*}:=H\cup H^{(k)}(n,p)$ satisfies condition $(ii)$ in Lemma~\ref{lem:absorbing} with $C_1=\left\lceil 1/\varepsilon\right\rceil(t+1)^2$ for $F=K_{t+1}^{(k)}$.
\end{lemma}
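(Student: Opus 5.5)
The plan is to realise each $S_{i,j}$-absorber as the vertex set of a comb $\mathcal{C}_{S_{i,j}}$ of width $s=\lceil 1/\varepsilon\rceil$ (Definition~\ref{def:com}), rooted at $W=S_{i,j}$. The comb has $(t+1)s(t+1)=\lceil1/\varepsilon\rceil(t+1)^2$ non-root vertices, which matches $C_1$. Such a comb is indeed an absorber. Without $W$, the cliques on $P_{i,j}\cup\{u_{i,j}\}$ for all $i,j$ tile $V(\mathcal C_W)\setminus W$. With $W$, the cliques on $\{u_{i,j-1}\}\cup P_{i,j}$ together with the comb back clique $\{u_{1,s},\dots,u_{t+1,s}\}$ tile everything.

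We split the edges as $H_{\text{det}}^1\cup H_{\text{ran}}^1$ (Setup~\ref{setup1}) or $H_{\text{det}}^2\cup H_{\text{ran}}^2$ (Setup~\ref{setup2}). The deterministic part is to be found in $H$, and the random part is embedded via Proposition~\ref{p1} with $F=H_{\text{ran}}^r$, base set $W$, $b=C_1$ and $\ell=s$. The hypotheses $\Phi_{F\setminus W}\ge Cn$ and $\Phi_{F,W}\ge Cn^{\varepsilon}\ge Cn^{1/\ell}$ are exactly Lemma~\ref{Phi:abs1} and Lemma~\ref{Phi:abs2}.

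The main task is to define, for each $S=S_{i,j}$ (ordered as $w_1,\dots,w_{t+1}$), a family $\mathcal F_S$ of ordered $C_1$-sets with $|\mathcal F_S|\ge\beta n^{C_1}$, each of which carries a copy of $H_{\text{det}}^r$ in $H$ rooted at $S$. We build teeth greedily, vertex by vertex, along each tooth $i$. Given $u_{i,j-1}$ (initially $w_i$), we choose $u_{i,j}\in\tilde N_\eta(u_{i,j-1})$; Proposition~\ref{prop:V0} gives at least $\alpha n/2$ choices.

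We then choose $P_{i,j}$ so that the required deterministic edges are present.
\begin{itemize}
\item Under Setup~\ref{setup1}, we need a $(k-1)$-partite $K^{(k-1)}$ on $P_{i,j}$ lying in $N(u_{i,j-1})\cap N(u_{i,j})$. This $(k-1)$-graph has at least $\eta n^{k-1}$ edges, so by supersaturation (Erd\H{o}s' theorem on complete partite hypergraphs) it contains $\Omega(n^{t})$ ordered copies of the almost-balanced $K^{(k-1)}_{k-1}(t)$.
\item Under Setup~\ref{setup2}, we need almost-balanced complete $k$-partite graphs on $\{u_{i,j-1}\}\cup P_{i,j}$ and on $P_{i,j}\cup\{u_{i,j}\}$. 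Lemma~\ref{lemma:3.2} applies with $m=t+1$: its common-neighbourhood hypothesis is only needed for the specific pair $u_{i,j-1},u_{i,j}$, which is guaranteed by $u_{i,j}\in\tilde N_\eta(u_{i,j-1})$. The codegree condition holds with $\eta\le\alpha$. This gives $\beta_0 n^{t}$ choices of $P_{i,j}$.
\end{itemize}
Multiplying over the $(t+1)s$ steps, and discarding the $O(n^{C_1-1})$ tuples with repeated vertices or vertices hitting $S$, yields $|\mathcal F_S|\ge\beta n^{C_1}$ for $\beta\ll\eta$. This holds uniformly over all $(t+1)$-sets $S$ and is independent of the random graph.

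Finally we apply Proposition~\ref{p1} once to the combined list of all $\sum_i t_i\le q\gamma_1 n\le 120\tau n$ base sets. This requires the quantity $120\tau$ to lie below the proposition's $\gamma$, which is allowed since $\tau\ll\beta$. The proposition embeds each $H_{\text{ran}}^r$ into $H^{(k)}(n,p)$ onto a tuple of $\mathcal F_S$, with the embedded sets pairwise disjoint and disjoint from all base sets. The root sets from different families $\mathcal S_i$ may overlap, but the proposition asks for the $U_i$ to be pairwise disjoint. I expect this to be the main technical obstacle. I would handle it by applying Proposition~\ref{p1} to each family separately in $q$ rounds. In each round we delete the previously used vertices (at most $C_1\cdot 120\tau n$) and all base sets, and shrink each $\mathcal F_S$ accordingly; it keeps size at least $\beta n^{C_1}/2$ since $\tau\ll\beta$. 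A union bound over the constantly many rounds preserves the a.a.s.\ conclusion. Combining the deterministic edges of the chosen tuple with the embedded random edges gives a full comb, hence an $S_{i,j}$-absorber, with all the $T_{i,j}$ vertex-disjoint.
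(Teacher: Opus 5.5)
Your construction matches the paper's: comb absorbers of width $s=\lceil 1/\varepsilon\rceil$, a deterministic skeleton $H_{\text{det}}^1$ or $H_{\text{det}}^2$ built greedily through $\tilde N_\eta$ plus supersaturation or Lemma~\ref{lemma:3.2}, completion by Proposition~\ref{p1} using Lemmas~\ref{Phi:abs1} and~\ref{Phi:abs2}, and one round per family $\mathcal S_i$ to deal with overlapping roots. The gap is in how you run those rounds. You apply Proposition~\ref{p1} $q$ times to the \emph{same} random hypergraph $H^{(k)}(n,p)$. In round $r$ you shrink each $\mathcal F_S$ by deleting the vertices used in rounds $1,\dots,r-1$. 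Those vertices come from embeddings found in $H^{(k)}(n,p)$. So the round-$r$ families are random and correlated with the hypergraph you are embedding into. Proposition~\ref{p1} is a statement about families fixed in advance. It says nothing about families chosen as a function of the random graph. You also cannot union bound over all possible deleted sets: there are exponentially many, and the proposition only gives failure probability $o(1)$. So ``a union bound over the constantly many rounds'' does not justify rounds $2,\dots,q$.

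The missing idea is multi-round exposure (sprinkling), which is what the paper does. Write $H^{(k)}(n,p)=H_1\cup\dots\cup H_\ell$ with $\ell=\lceil 120\tau/\gamma_1\rceil\ge q$, where the $H_r$ are independent copies of $H^{(k)}(n,p')$ and $(1-p')^\ell=1-p$, so $p'\ge p/\ell$. Choose $C$ large enough that Lemmas~\ref{Phi:abs1} and~\ref{Phi:abs2} (and the hypotheses of Proposition~\ref{p1}) hold with $C/\ell$ in place of $C$. In round $r$, embed $H_{\text{ran}}$ only into $H_r$. The vertices to avoid in round $r$ depend only on $H_1,\dots,H_{r-1}$. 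Conditioning on them leaves $H_r$ a fresh copy of $H^{(k)}(n,p')$, so Proposition~\ref{p1} now applies to genuinely fixed families. Absorbers found in $H\cup H_r$ are absorbers in $H^*$. With this modification, at the cost of a constant factor in $p$, the rest of your argument goes through.
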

\begin{proof}
Let $H$  be defined as in the statement of the lemma. We define additional constants
such that
\[1/n\ll\gamma_1\ll\tau\ll\beta\ll\beta'
\ll\gamma\ll\eta
\ll\varepsilon \ll \alpha,1/k,1/t.\]
Let $q\le \frac{120\tau}{\gamma_1}$ and consider such families of $(t+1)$-sets $\mathcal{S}_1, \dots,\mathcal{S}_q$ of $V$, where $\mathcal{S}_i=\{S_{i,1},\dots,S_{i,t_i}\}$ and $t_i=|\mathcal{S}_i|\le \gamma_1 n$. 
Let $s=\left\lceil 1/\varepsilon\right\rceil$, $C_1=s(t+1)^2$, and $\ell=\big\lceil\frac{120\tau}{\gamma_1}\big\rceil$. 
Let $C>0$ be a constant such that the conclusions of both Lemma~\ref{Phi:abs1} and Lemma~\ref{Phi:abs2} hold with $C/\ell$ in place of $C$.

Let $p\geq Cn^{-t/\binom{t+1}{k}}$. 
We will show that a.a.s.~there exist vertex-disjoint families of $C_1$-sets $\{T_{i,j}\,\colon\,  i\in [q] , j\in [t_i]\}$ in $V\setminus \bigcup_{i\in [q], j\in [t_i]} S_{i,j}$ such that, for every $i\in [q]$ and $j\in [t_i]$, $ T_{i,j}$ is an $S_{i,j}$-absorber.
We expose $H'\coloneqq H^{(k)}(n, p)$ in $\ell$ rounds: $H'= \bigcup_{i=1}^{\ell} H_i$, where each $H_i$ is an independent copy of $H^{(k)}(n, p')$ satisfying $(1 - p')^{\ell} = 1 - p$. Note that, since $(1 - p')^{\ell} \geq 1 - \ell p'$, it follows that $p' \geq p / \ell\geq  C'n^{-t/\binom{t+1}{k}}$, where $C'=C/\ell$. 
We now establish the following claim.

\begin{claim}
  \label{S}
Let $X \subseteq V$ with $|X| \ge n-\eta^2 n$. 
Let $\nu\le \gamma_1 n$, and let $\{S_1, \dots, S_\nu\}$ be a family of vertex-disjoint $(t+1)$-subsets of  $V$. 
Then, for each $j\in [\ell]$, a.a.s.~$H\cup H_j$ contains vertex-disjoint $s(t+1)^2$-subsets $T_1, \dots, T_\nu$ in $X \setminus \left( \bigcup_{i\in [\nu]} S_i \right)$ such that each $T_i$ is an $S_i$-absorber.
\end{claim}
\begin{proof}
[Proof of the claim]
Fix $j\in [\ell]$. Our objective is to find $\nu$ vertex-disjoint $s(t+1)^2$-sets $T_1,\dots,T_{\nu}$ in $X \setminus \left( \bigcup_{i\in [\nu]} S_i \right)$ such that, for every $i\in[\nu]$, $S_i\cup T_i$ spans a copy of the comb $\mathcal{C}_{W}$ rooted at $W$ with $S_i$ playing the role of $W$ in $H\cup H_j$ (see Definition~\ref{def:com}).

For Setup~\ref{setup1}, let $S=\{s_{1},\dots,s_{t+1}\}$ be a $(t+1)$-set in $V$. 
To build $S$-absorbers, we first build certain structures using deterministic edges and then supply the missing edges using random edges.
The first step is to find the candidates for $S$-absorbers.
Specifically, we find ordered $s(t+1)^2$-sets $Y\coloneqq\bigcup_{i\in[t+1],j\in[s]}\left(P_{i,j}\cup\{u_{i,j}\}\right)$ such that $|P_{i,j}|=t$ and $Y\cup S$ spans a copy of 
$H_{\text{det}}^1$ with $S$ playing the role of $W$ (see Definition \ref{def:H_{det}^1}).
By Proposition~\ref{prop:V0}, for any $v \in V$, $|\tilde{N}_{\eta}(v)| \geq \alpha n/2$.
So there are at least $\alpha n/2-s(t+1)^2-\nu(t+1)-\eta^2 n\ge \alpha n/4$ choices for $u_{1,1}\in X$ such that $|N_{H}(s_1)\cap N_{H}(u_{1,1})|\ge \eta n^{k-1}$ and $u_{1,1}$ is disjoint from both $\bigcup_{i\in [\nu]} S_i$ and all previously chosen vertices. 
Let $K_{i,j}^{(k-1)}(t)$ denote the complete $(k-1)$-partite $(k-1)$-graph on the vertex set $P_{i,j}$ for $i\in[t+1]$ and $j\in[s]$. 
Recall that the link $L_v$ of a vertex $v\in V(H)$ is the $(k-1)$-graph with vertex set $V(L_v)=V(H)\setminus \{v\}$ and edge set $E(L_v)=\{S\colon\, S\cup \{v\}\in E(H)\}$. Then 
$|E(L_{s_1}\cap L_{u_{1,1}})|\ge\eta n^{k-1}$ and $|E(L_{s_1}[X]\cap L_{u_{1,1}}[X])|\ge\eta n^{k-1}-\eta^2 n^{k-1}\ge \eta n^{k-1}/2 $.
By the supersaturation result (see~\cite[Corollary 2]{Erdos}), there are at least $\beta'n^t$ copies of $K_{1,1}^{(k-1)}$ in $L_{s_1}[X]\cap L_{u_{1,1}}[X]$.
Thus, there are at least $\beta'n^t-[s(t+1)^2+\nu(t+1)]n^{t-1}\ge \beta'n^t/2$ choices for $P_{1,1}$ such that, for every $e\in E\left(K_{1,1}^{(k-1)}\right)$, both $\{s_1\}\cup e$ and $e\cup \{u_{1,1}\}$ are edges in $H$.
Similarly, we greedily choose  $u_{1,2}, P_{1,2}, \dots, u_{1,s},P_{1,s}$ such that each of them is vertex-disjoint from both $\bigcup_{i\in [\nu]} S_i$ and all previously chosen vertices. 
Repeating this process for all $i\in[t+1]$, there are at least 
\[\left(\alpha n/4\right)^{s(t+1)}\left(\beta' n^{t}/2\right)^{s(t+1)}\ge \beta n^{s(t+1)^2}\]
desired ordered $s(t+1)^2$-sets $Y$ for $S$ such that $Y\cup S$ spans a copy of 
$H_{\text{det}}^1$.
For every $i\in[\nu]$, let $\mathcal{F}_i$ denote the family of such ordered $s(t+1)^2$-sets corresponding to the $(t+1)$-set $S_i$. Then, $|\mathcal{F}_i|\ge \beta n^{s(t+1)^2}$.

The second step is to find an $S_i$-absorber from $\mathcal{F}_i$ for every $i\in[\nu]$ by adding edges from $H_j$ to fill the missing
edges.
Let $H_{\text{ran}}^1$ and $W$ be the labeled $k$-graph and vertex set  defined in Definition~\ref{def:H_{det}^1}.
Observe that $H_{\text{ran}}^1$ is isomorphic to the $k$-graph induced by the required missing edges.
By Lemma~\ref{Phi:abs1}, we have 
$\Phi_{H_{\text{ran}}^1\setminus W}(n,p')\geq C'n$ and $\Phi_{H_{\text{ran}}^1, W}(n,p')\geq C'n^{\varepsilon}$. 
To apply Proposition~\ref{p1}, we set $F=H_{\text{ran}}^1$, consider the set $W$, and use the families $\mathcal{F}_{i}$ for each $i\in[\nu]$. 
We set the parameters $b=s(t+1)^2$ and $w=t+1$, and take $U_i=S_{i}$ for each $i\in[\nu]$. 
This guarantees that, a.a.s., there exist pairwise disjoint embeddings $\phi_1,\dots,\phi_{\nu}$. 
Each $\phi_{i}$ embeds a copy of $H_{\text{ran}}^1$ into $H_j$, such that $W$ is mapped to $S_{i}$ and $V(H_{\text{ran}}^1)\setminus W$ is mapped to some $s(t+1)^2$-set in $\mathcal{F}_{i}$. 
Consequently, we obtain $\nu$ vertex-disjoint $s(t+1)^2$-sets $T_1, \dots, T_{\nu}$ in $X\setminus \bigcup_{i\in [\nu]} S_{i}$ such that each $T_i$ is an $S_i$-absorber in $H\cup H_j$.

For Setup~\ref{setup2}, the proof proceeds analogously to that of Setup~\ref{setup1}. 
In the first step, we apply Lemma~\ref{lemma:3.2} with $m=t+1$ (in place of the supersaturation result) to find ordered $s(t+1)^2$-sets $Y$ such that $Y\cup S$ spans a copy of 
$H_{\text{det}}^2$ (see Definition~\ref{def:H_{det}^2}) as candidates for $S$-absorbers.
Subsequently, we apply Lemma~\ref{Phi:abs2} (in place of Lemma~\ref{Phi:abs1}) and Proposition~\ref{p1} to complete the embeddings of copies of $\mathcal{C}_W$ by adding the random edges.
\end{proof}

We are now prepared to establish property (ii) in Lemma~\ref{lem:absorbing}. 
For each family $\{S_{i,1},\dots,S_{i,t_i}\}$ with $1\leq i \leq q$ and $t_i \leq \gamma_1 n$, we find $\{T_{i,1},\dots,T_{i,t_i}\}$ step by step. 
Let $t_0=1$ and $T_{0,1} = \emptyset$. 
Suppose we have already found $\{T_{x,1}, \dots, T_{x,t_x}\}$ for all $0 \leq x \leq i-1$. 
We now find $\{T_{i,1},\dots,T_{i,t_i}\}$. 
Let $S^*=\bigcup_{i \in [q], j \in [t_i]} S_{i,j}\cup \bigcup_{0 \leq x \leq i-1}(T_{x,1} \cup \cdots \cup T_{x,t_x})$ and $X = V \setminus S^* $. 
By $q \leq \frac{120 \tau}{\gamma_1}$ and $\tau\ll \eta$, we have
\[
\left|S^*\right| \leq q \gamma_1  (t+1)n + q \gamma_1 s(t+1)^2 n \leq \eta^2 n.
\]
Thus, $|X| \geq n - \eta^2 n$.
By Claim~\ref{S}, a.a.s.~$(H\cup H_{i})[X]$ contains a family $\{T_{i,1},\dots,T_{i,t_i}\}$ such that, for every $j \in [t_i]$, $T_{i,j}$ is an $S_{i,j}$-absorber. 
Repeating this process for all $i \in [q]$ and by union bound, a.a.s.~we obtain pairwise  disjoint families $\{T_{1,1},\dots,T_{1,t_1}\}, \dots, \{T_{q,1},\dots,T_{q,t_q}\}$ such that, for every $i \in [q]$ and $j \in [t_i]$, $T_{i,j}$ is an $S_{i,j}$-absorber.

This concludes the proof. 
\end{proof}

\subsection{Verification of condition~$(i)$   of Lemma~\ref{lem:absorbing}}


\begin{lemma}\label{vertexcover}
Let $t\geq k\ge 3$ be integers, and suppose $1/n\ll \gamma_1\ll \varepsilon\ll \beta \ll \tau \ll \gamma \ll\alpha,1/k,1/t$.  
Let $H=(V,E)$ be an $n$-vertex $k$-graph with $\delta_{k-1}(H)\ge \alpha n$.
Then there exists a constant $C= C(\alpha,\varepsilon) > 0$ such that the following holds.
If $p\geq Cn^{-t/\binom{t+1}{k}}$, then $H^*:=H\cup H^{(k)}(n,p)$ contains a vertex set $V_1\subseteq V(H)$ of size $\tau n$ such that  for any subset $S\subseteq V(H^*)$ of size at most $\gamma_1 n$, a.a.s.~there exists a set $T\subseteq V_1\setminus S$ with $|T|=t|S|$ such that $H^*[S\cup T]$ contains a $K_{t+1}^{(k)}$-factor. 
In particular, Lemma \ref{lem:absorbing} $(i)$  holds for $H^*$ and $F=K_{t+1}^{(k)}$.
\end{lemma}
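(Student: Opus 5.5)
We will take $V_1$ to be a uniformly random subset of $V$ of size $\tau n$. The plan is to cover each vertex $v\in S$ by one copy of $K_{t+1}^{(k)}$ whose other $t$ vertices lie in $V_1$, using deterministic edges of $H$ at $v$ and random edges elsewhere. We then embed all these copies vertex-disjointly with Proposition~\ref{p1}, taking $F$ to be a single rooted clique with $W=\{w\}$ of size $1$.

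\textbf{Choice of $V_1$.} By Chernoff bounds and a union bound over $(k-1)$-sets, with positive probability (in fact, with high probability) $\delta_{k-1}(H[V_1\cup\{x\}])\ge \alpha\tau n/2$ for every $x\in V$; we fix such a $V_1$. For each $v\in S$, the link $L_v[V_1]$ then has at least $\alpha\tau^{k-1}n^{k-1}/(2(k-1)!)$ edges. By supersaturation (\cite[Corollary 2]{Erdos}), $L_v[V_1]$ therefore contains at least $\beta n^t$ copies of the almost-balanced complete $(k-1)$-partite $(k-1)$-graph on $t$ vertices. We let $\mathcal{F}_v$ be the family of ordered $t$-sets $P\subseteq V_1$ spanning such a copy, so $|\mathcal{F}_v|\ge\beta n^t$.

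\textbf{The random part.} Let $F$ be $K_{t+1}^{(k)}$ on $\{w\}\cup P$ with the edges $\{w\}\cup e$, for $e$ in the partite $(k-1)$-graph, removed. We need $\Phi_{F\setminus w}\ge C'n$ and $\Phi_{F,w}\ge C'n^{\varepsilon}$. For the first bound, $F\setminus w$ is $K_t^{(k)}$, and any subgraph on $s'\le t$ vertices has at most $\binom{s'}{k}\le\binom{t+1}{k}(s'-2)/t$ edges by Case 1 of Lemma~\ref{l}. Hence $\Phi_{F\setminus w}\ge C'n^{2}$, in fact. For the second bound, $F$ is a non-complete $k$-graph on $t+1$ vertices, so Lemma~\ref{l}(ii) gives $\Phi_{F,w}\ge C'n^{\varepsilon}$ directly. (If one prefers a version needing no setup, any $0<\varepsilon<\min\{1,t/\binom{t+1}{k}\}$ works here, and Lemma~\ref{l}(ii) requires no Setup hypothesis.)

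\textbf{Embedding and main obstacle.} We apply Proposition~\ref{p1} with $U_v=\{v\}$, $b=t$, $w=1$, $\ell=\lceil1/\varepsilon\rceil$ and $|S|\le\gamma_1 n$. This yields a.a.s.\ disjoint embeddings $\phi_v$ with $\phi_v(P)\in\mathcal{F}_v$. Combined with the deterministic edges, each $\{v\}\cup\phi_v(P)$ spans $K_{t+1}^{(k)}$ in $H^*$, and $T=\bigcup_v\phi_v(P)$ has size $t|S|$. The point we expect to need the most care is that $T$ must avoid $S$, since $S$ may meet $V_1$. We handle this by restricting $\mathcal{F}_v$ to $t$-sets in $V_1\setminus S$. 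This loses at most $\gamma_1 n\cdot n^{t-1}\ll\beta n^t$ sets, because $\gamma_1\ll\beta$ in the hierarchy, so $|\mathcal{F}_v|\ge\beta n^t/2$ still holds. A second issue is that Proposition~\ref{p1} also requires the images to be disjoint from $\bigcup U_v=S$; the same restriction of $\mathcal{F}_v$ takes care of this. Finally, for the last claim of the statement we note that $\tau n/4\le|V_1|\le\tau n$, which gives Lemma~\ref{lem:absorbing}(i).
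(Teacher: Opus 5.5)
Your proof is correct, but it gets the deterministic edges from a different place than the paper does.

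\textbf{The paper's route.} The paper chooses $V_1$ by simple averaging so that $H[V_1]$ has density at least $\alpha$. It uses one common family $\mathcal{F}$ of ordered $t$-sets in $V_1$ whose first $k$ vertices form an edge of $H$. It then takes $F$ to be $K_{t+1}^{(k)}$ minus a single edge $e$ with the root $w\notin e$. So no deterministic edge touches the vertices of $S$, and $H^{(k)}(n,p)$ supplies every edge at $s_i$. Lemma~\ref{l}(ii) applies because this $F$ is non-complete, exactly as in your argument.

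\textbf{Your route.} You use the codegree condition at each $v\in S$ to supply all edges $\{v\}\cup e$, with $e$ ranging over an almost-balanced $(k-1)$-partite structure in the link of $v$. This forces you to pick $V_1$ randomly, using hypergeometric concentration and a union bound over $(k-1)$-sets, so that the links $L_v[V_1]$ inherit density. You then need supersaturation in $L_v[V_1]$ to obtain $|\mathcal{F}_v|\ge\beta n^t$. All of these steps check out.

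\textbf{Comparison.} The extra deterministic edges only buy slack that is never used, for instance $\Phi_{F\setminus w}\ge C'n^2$ instead of $\ge C'n$. Removing a single edge from $K_{t+1}^{(k)}$ already yields the $n^{\varepsilon}$ gain in $\Phi_{F,w}$. The paper's version is therefore shorter. It also shows that condition (i) of the absorbing lemma uses only the density of $H$, not its codegree.

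\textbf{A small misreading.} Disjointness of the images from $\bigcup_v U_v=S$ is part of the conclusion of Proposition~\ref{p1}, not a hypothesis. Restricting $\mathcal{F}_v$ to $V_1\setminus S$ is harmless but unnecessary.
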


\begin{proof}
Note that $H$ has density at least $\alpha$.
By averaging, let $V_1\subseteq V(H)$ be a subset of size $\tau n$ with density at least $\alpha$.
Fix any subset $S\subseteq V(H)$ with $|S|=m\le \gamma_1 n$ and let $S:=\{s_1,\dots,s_{m}\}.$
For each $i\in[m]$, we aim to find a copy of $K_{t+1}^{(k)}$ in $V_1\cup\{s_i\}$ containing $s_i$, such that all these copies are vertex-disjoint and together form a $K_{t+1}^{(k)}$-tiling covering $S$.

Let $\mathcal F$ be a family of ordered $t$-sets in $V_1$ such that the first $k$ vertices span an edge of $H$.
By the density condition, we can take $|\mathcal{F}|\ge \alpha \tau^t n^t/2 \ge \beta n^t$.
%

Next, we complete the copies of $K_{t+1}^{(k)}$ by adding  edges from $H^{(k)}(n,p)$ to fill the missing edges by Proposition~\ref{p1}.
Let $K'$ be obtained by deleting any edge $e$ from $K_{t+1}^{(k)}$ and $W=\{w\}$ for $w\in V(K_{t+1}^{(k)})\setminus e$.
By Lemma~\ref{l} (ii), $\Phi_{K'\setminus W} \ge Cn$ and $\Phi_{K', W} \ge Cn^\varepsilon$.
We apply Proposition~\ref{p1} with $F=K', W=\{w\}, b=t, U_i=\{s_i\}$, and with the candidate families $\mathcal F_i=\mathcal F$, $i\in[m]$. 
It follows that  a.a.s.~there exist pairwise disjoint embeddings $\phi_1,\dots,\phi_{m}$ such that  each $\phi_{i}$ embeds  a copy of $K'$ into $H^{(k)}(n,p)$ on $V_1$ with $W$ mapped to $s_{i}$ and $V(K')\setminus W$ mapped to some $t$-set in $\mathcal{F}_{i}$.
Consequently, we obtain $m$ vertex-disjoint $t$-sets $T_1, \dots, T_{m}$ in
$V_1$ such that each $T_i\cup \{s_i\}$ spans a copy of $K_{t+1}^{(k)}$.
Let $T=\bigcup_{i\in[m]} T_i$.
Then $|T|=tm$, and a.a.s.~$H^*[S\cup T]$ contains a $K_{t+1}^{(k)}$-factor. 
\end{proof}

\section{Proof of the main theorem}\label{s5}
In this section, we complete the proof of Theorem~\ref{main}.
We first derive a covering lemma, which allows us to find a collection of vertex-disjoint copies of $K_{t+1}^{(k)}$ to cover all but a small fraction of vertices.  Then we combine this covering lemma with Lemma~\ref{lem:absorbing},   Lemma~\ref{lem:absa}  and  Lemma~\ref{vertexcover} to prove Theorem~\ref{main}.

\begin{lemma}[Covering Lemma]\label{lem:cov}
  Let $t\geq k\ge 3$  be integers. Let $\gamma>0$ and suppose $1/n \ll 1/C\ll \gamma\ll  1/k,1/t$.
  Let $H^{(k)}(n,p)$ be the binomial random $k$-graph on an $n$-vertex set $V$.
  If $p=p(n)\geq C n^{-t/\binom{t+1}{k}}$, then a.a.s.~$H^{(k)}(n,p)$ contains a collection of vertex-disjoint copies of $K_{t+1}^{(k)}$ covering all but at most $\gamma n$ vertices of $V$.
\end{lemma}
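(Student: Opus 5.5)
We will prove the Covering Lemma by a greedy argument that repeatedly finds a copy of $K:=K_{t+1}^{(k)}$ inside the set of still uncovered vertices. To make this work, it suffices to establish the following deterministic-looking property of $H^{(k)}(n,p)$: a.a.s., every vertex subset $U\subseteq V$ with $|U|\ge \gamma n$ spans a copy of $K$. Given this property, we pick copies of $K$ one at a time among the uncovered vertices. As long as at least $\gamma n$ vertices remain uncovered, the property provides a further disjoint copy. When the process stops, fewer than $\gamma n$ vertices are left uncovered, which is exactly the conclusion of the lemma.

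To prove the property, we fix a set $U$ with $|U|=\lceil\gamma n\rceil$; it suffices to handle sets of exactly this size, since larger sets contain one. We then bound the probability that $H^{(k)}(n,p)[U]$ is $K$-free using Janson's inequality, which gives
\[\mathbb{P}\bigl[X_U=0\bigr]\le \exp\Bigl(-\tfrac{\mu^2}{2\bar\Delta}\Bigr),\]
where $X_U$ counts copies of $K$ in $U$ and $\mu=\mathbb{E}X_U$. The standard estimate (see \cite{Janson}) gives $\mu^2/\bar\Delta=\Theta(\min_{F\subseteq K,\,e_F>0} |U|^{v_F}p^{e_F})$, with implicit constants depending only on $t$ and $k$. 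Since $|U|\ge\gamma n$, this minimum is at least $\gamma^{t+1}\Phi_K$. By Lemma~\ref{lem:PropOfPhi}\ref{fact:phiK_{t+1}}, $\Phi_K\ge Cn$. Hence
\[\mathbb{P}\bigl[X_U=0\bigr]\le \exp\bigl(-c\,\gamma^{t+1}Cn\bigr)\]
for some constant $c=c(k,t)>0$.

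Finally, we take a union bound over at most $2^n$ choices of $U$. Because $1/C\ll\gamma$, we may choose $C$ large enough that $c\gamma^{t+1}C>1$, say. Then the total failure probability is at most $2^n e^{-c\gamma^{t+1}Cn}=o(1)$, which establishes the property a.a.s.

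The only real point requiring care is the Janson computation: verifying that $\bar\Delta$, which includes the diagonal term $\mu$, satisfies $\mu^2/\bar\Delta\gtrsim \min_F |U|^{v_F}p^{e_F}$. This is a standard fact, and it is precisely the reason for defining $\Phi$. With this in hand, the linear lower bound $\Phi_K\ge Cn$ is exactly what is needed to beat the $2^n$ union bound, and this is where the choice $p\ge Cn^{-t/\binom{t+1}{k}}$ enters.
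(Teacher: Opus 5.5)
Your proof is correct and follows essentially the paper's route. The paper also argues greedily from the property that a.a.s.\ every $\gamma n$-subset spans a copy of $K_{t+1}^{(k)}$, and it likewise gets that property from $\Phi_{K_{t+1}^{(k)}}\ge Cn$ (Lemma~\ref{lem:PropOfPhi}\ref{fact:phiK_{t+1}}). The only difference is that the paper cites this property as Lemma~\ref{l5} from \cite{BHKM2019}, whereas you prove it directly via Janson's inequality and a union bound over at most $2^n$ sets, which is the standard proof of that cited lemma.
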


Our proof of Lemma~\ref{lem:cov} uses Lemma~\ref{lem:PropOfPhi}~\ref{fact:phiK_{t+1}} and the following result from~\cite{BHKM2019}, which ensures that $H^{(k)}(n,p)$ can be almost
covered by a collection of vertex-disjoint copies of $K_{t+1}^{(k)}$.

\begin{lemma}\cite[Lemma 2.2 (i)]{BHKM2019}
\label{l5}
Let $F$ be a labelled $k$-graph with $b$ vertices.
Let $\gamma>0$ and suppose $1/n\ll 1/C\ll \gamma,1/b$.
Let $H^{(k)}(n,p)$ be the binomial random $k$-graph on an $n$-vertex set $V$.
If $p=p(n)$ is such that $\Phi_{F}\geq Cn$, then a.a.s.~every induced subgraph of $H^{(k)}(n,p)$ of order $\gamma n$ contains a copy of $F$.
\end{lemma}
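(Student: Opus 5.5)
We will use Janson's inequality for a single fixed vertex set, and then take a union bound over all vertex sets of size $\gamma n$. Since $1/C\ll\gamma,1/b$, we can make the failure probability for one set smaller than $2^{-2n}$. There are at most $2^n$ sets, so the union bound then gives failure probability $o(1)$.

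Fix $U\subseteq V$ with $|U|=\gamma n$. For each injection $\psi\colon V(F)\to U$, let $I_\psi$ be the indicator that $\psi(E(F))\subseteq E(H^{(k)}(n,p))$, and let $X_U=\sum_\psi I_\psi$. Then $X_U>0$ if and only if $H^{(k)}(n,p)[U]$ contains a copy of $F$.

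\textbf{Expectation.} We have
\[
\mu:=\mathbb E X_U=(\gamma n)_b\,p^{e_F}\ge \tfrac12\gamma^b n^b p^{e_F}\ge \tfrac12\gamma^b\Phi_F,
\]
where the last step uses $F\subseteq F$ in the definition of $\Phi_F$. (If $e_F=0$ the statement is trivial, so we may assume $e_F>0$.)

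\textbf{Correlation term.} Consider the pairs $\psi\ne\psi'$ whose edge images intersect. Group these pairs by the subgraph $H\subseteq F$ with $e_H>0$ that corresponds to the common edges, together with the vertex identification that $H$ induces. There are at most $O_b(1)$ such types. For each type, the number of pairs is at most $(\gamma n)^{2b-v_H}$ up to constants, and each pair contributes $p^{2e_F-e_H}$. Hence
\[
\Delta:=\sum_{\psi\sim\psi'}\mathbb E[I_\psi I_{\psi'}]\le c_b\sum_{H}\frac{\mu^2}{(\gamma n)^{v_H}p^{e_H}}\le c_b'\,\frac{\mu^2}{\gamma^{b}\Phi_F}.
\]

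\textbf{Janson's inequality.} Janson's inequality gives
\[
\mathbb P(X_U=0)\le \exp\!\Big(-\frac{\mu^2}{2(\mu+\Delta)}\Big)\le\exp\!\big(-c''\min\{\mu,\ \mu^2/\Delta\}\big)\le \exp\!\big(-c'''\gamma^b\Phi_F\big).
\]
The constants $c'',c'''$ depend only on $b$. Since $\Phi_F\ge Cn$, the right-hand side is at most $\exp(-c'''\gamma^b Cn)$.

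\textbf{Union bound.} Choose $C$ large enough that $c'''\gamma^bC\ge 2$. Then
\[
\sum_{U}\mathbb P(X_U=0)\le 2^n e^{-2n}=o(1).
\]
So a.a.s.\ every induced subgraph of order $\gamma n$ contains a copy of $F$.

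\textbf{Main obstacle.} The delicate step is the bound on $\Delta$. One must check that the overlap count is controlled by $(\gamma n)^{v_H}p^{e_H}\ge\gamma^b n^{v_H}p^{e_H}\ge \gamma^b\Phi_F$ uniformly over all overlap types, including overlaps where the shared vertex set is larger than $V(H)$ but the extra shared vertices carry no common edges. Such extra vertex overlaps only decrease the number of pairs, so they make the bound better, not worse. This is exactly why the hypothesis is placed on $\Phi_F$, which is a minimum over all subgraphs, rather than only on $n^{b}p^{e_F}$.
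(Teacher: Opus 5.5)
Your proof is correct. The paper does not prove this lemma itself; it cites it from \cite{BHKM2019}, where it is obtained by the same standard route: Janson's inequality for a fixed $\gamma n$-set $U$, giving $\mathbb{P}(X_U=0)\le\exp(-c\,\gamma^b\Phi_F)$ via the overlap estimate $(\gamma n)^{v_H}p^{e_H}\ge\gamma^b\Phi_F$, followed by a union bound over the at most $2^n$ choices of $U$.
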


Now we are ready to prove Lemma~\ref{lem:cov}.

\begin{proof}[Proof of Lemma~\ref{lem:cov}]
Fix $t\geq k\ge 3$. 
Applying Lemma~\ref{lem:PropOfPhi}~\ref{fact:phiK_{t+1}} and Lemma~\ref{l5} to $H^{(k)}(n,p)$ with $F=K_{t+1}^{(k)}$ implies that a.a.s.~every induced subgraph of $H^{(k)}(n,p)$ with at least $\gamma n$ vertices contains a copy of $K_{t+1}^{(k)}$.
Consequently, we can greedily find pairwise vertex-disjoint copies of $K_{t+1}^{(k)}$ from $V$ until fewer than $\gamma n$ vertices remain.
\end{proof}

We now combine Lemma~\ref{lem:cov} with the absorbing results established in Section~4 to prove the main theorem.

\begin{proof}[Proof of Theorem~\ref{main}]
Let $t\geq k\ge 3$ be fixed integers, and let $n\in (t+1)\mathbb{N}$ be sufficiently large. 
Assume Setup~\ref{setup1} or Setup~\ref{setup2} holds.
Suppose $\alpha> 0$ and  $1/n\ll 1/C\ll\xi\ll \gamma\ll \alpha,1/k,1/t$.
Let $H=(V,E)$ be an $n$-vertex $k$-graph with $\delta_{k-1}(H)\geq \alpha n$, and
suppose $p=p(n)\geq 2C n^{-t/\binom{t+1}{k}}$.
Expose $H'=H^{(k)}(n,p)$ in two rounds: $H' = H_1 \cup H_2 $, where $H_1$ and $H_2$ are independent copies of $H^{(k)}(n, p')$, with $(1-p')^2=1-p$.
Note that $(1-p')^2>1-2p'$, which implies $p'> p/2\geq Cn^{-t/\binom{t+1}{k}}$.

By Lemma~\ref{lem:absa} and Lemma~\ref{vertexcover}, $H^{*}:=H\cup H_1$ satisfies the conditions $(i)$ and $(ii)$ of Lemma~\ref{lem:absorbing}.
Consequently, $H^{*}$ contains an absorbing set $V_A\subseteq V(H)$ with $|V_A|\le\gamma n$ such that, for every  $U\subseteq V\setminus V_A$ with $|U|\le \xi n$ and $(|V_A|+|U|)\in(t+1)\mathbb{N}$, a.a.s.~$H^{*}[V_A\cup U]$ contains a $K_{t+1}^{(k)}$-factor.
Next, we apply Lemma~\ref{lem:cov} to $H_2$ 
to obtain a collection of vertex-disjoint copies of $K_{t+1}^{(k)}$ that together cover all but at most $\xi n$ vertices of $V\setminus V_A$.
Let $R$ denote the set of uncovered vertices.
Then $|R|\le \xi n$.
Since $n$ is divisible by $t+1$, and the set $V \setminus (V_A\cup R)$ is covered by vertex-disjoint copies of $K_{t+1}^{(k)}$, it follows that $|V_A| + |R|$ is divisible by $t+1$.
Since $|R| \le \xi n$, a.a.s.~$H^{*}[V_A\cup R]$ contains a $K_{t+1}^{(k)}$-factor.
Combining this with the copies covering $V \setminus (V_A \cup R)$, we a.a.s.\ obtain the desired $K_{t+1}^{(k)}$-factor in $H\cup H^{(k)}(n,p)$.
\end{proof}

\section{Concluding Remarks}

In this paper we studied the clique-factor problem in randomly perturbed hypergraphs under (vanishing) minimum codegree conditions.
Our main result, Theorem~\ref{main}, resolves the problem for all sufficiently large cliques, while we believe that the result should hold for all clique-factors, that is, for all $t\ge k-1$.

\begin{conjecture}
Let $k \ge 3$ and let $t\ge k-1$.
For every $\alpha>0$, there exists $C>0$ such that the following holds for sufficiently large $n\in(t+1)\mathbb{N}$.
Let $H$ be a $k$-graph on $n$ vertices with $\delta_{k-1}(H)\geq \alpha n$, and let $p= p(n)\geq Cn^{-{t/\binom{t+1}{k}}}$.
Then a.a.s.~$H\cup H^{(k)}(n,p)$ contains a $K_{t+1}^{(k)}$-factor.
\end{conjecture}

It is also possible that one can replace the minimum codegree to the much weaker minimum vertex degree, and an even broader conjecture is to extend the results to $F$-factors for arbitrary $k$-graph $F$.
Our current approach fails for small cliques and
the limitation is the construction of the absorbers, based on the idea of \textit{reachable sets}, where indeed the 1-reachable sets --  $K_{t+1}^{(k)}$ with one vertex duplicated are expensive for (sparse) random $k$-graphs.
In our proof, we compensate this loss by forcing complete $k$-partite $k$-graphs by deterministic edges using the minimum codegree conditions, which are efficient for sufficiently large cliques.

\vspace{2mm}

\noindent
\textbf{AI Disclosure.} All ideas  are our own and were  developed long before LLMs were available. The writing is also completely ours and we used ChatGPT 6 only to do a final sweep for small typos and mathematical errors.

\bibliographystyle{abbrv}
\bibliography{clique_factors__perturbed}

\end{document}